\documentclass[12pt]{article}

\usepackage{amsmath}
\usepackage{amssymb}
\usepackage{latexsym}
\usepackage{color}

\setlength{\textheight}{8.5in} \setlength{\textwidth}{6.0in}
\setlength{\evensidemargin}{0.2in}
\setlength{\oddsidemargin}{0.2in} \setlength{\headheight}{0in}
\setlength{\headsep}{0in}

\newtheorem{assumption}{Assumption}
\def\qed{ \ \vrule width.2cm height.2cm depth0cm\smallskip}
\newenvironment{proof}{\noindent {\bf Proof.\/}}{$\qed$\vskip 0.1in}

\newcommand{\ol}{\overline}
\newcommand{\ul}{\underline}

\newcommand{\eps}{\varepsilon}

\newcommand{\ba}{\begin{array}}
\newcommand{\ea}{\end{array}}
\newcommand{\be}{\begin{equation}}
\newcommand{\ee}{\end{equation}}
\newcommand{\bea}{\begin{eqnarray}}
\newcommand{\eea}{\end{eqnarray}}
\newcommand{\beaa}{\begin{eqnarray*}}
\newcommand{\eeaa}{\end{eqnarray*}}

\def\dbE{\mathbb{E}}
\def\dbF{\mathbb{F}}

\def\dbH{\mathbb{H}}

\def\dbL{\mathbb{L}}

\def\dbN{\mathbb{N}}
\def\dbP{\mathbb{P}}
\def\dbR{\mathbb{R}}

%
%
\def\a{\alpha}

\def\d{\delta}
\def\e{\varepsilon}

\def\l{\lambda}
\def\m{\mu}

\def\si{\sigma}

\def\f{\varphi}
\def\th{\theta}
\def\o{\omega}

%
%
%

\def\Th{\Theta}

\def\O{\Omega}
%
%
\def\cA{{\cal A}}

\def\cF{{\cal F}}

\def\cL{{\cal L}}

\def\cO{{\cal O}}

\def\no{\noindent}

\def\q{\quad}

\def\pa{\partial}
\def\cd{\cdot}

\def\qed{ \hfill \vrule width.25cm height.25cm depth0cm\smallskip}

\newcommand{\basa}{\begin{assumption}}
\newcommand{\easa}{\end{assumption}}

\newcommand{\bas}{\begin{assum}}
\newcommand{\eas}{\end{assum}}

\def\limsup{\mathop{\overline{\rm lim}}}
\def\liminf{\mathop{\underline{\rm lim}}}

\def\pa{\partial}

 \def\cd{\cdot}

\def\1{{\bf 1}}

\def\Hbmo{\dbH_{\mbox{\sc\tiny bmo}}}

\def\:{\!:\!}
\def\reff#1{{\rm(\ref{#1})}}
\def \proof{{\noindent \bf Proof\quad}}

at 9pt

\begin{document}

\newtheorem{thm}{Theorem}[section]
\newtheorem{lem}[thm]{Lemma}
\newtheorem{cor}[thm]{Corollary}
\newtheorem{prop}[thm]{Proposition}
\newtheorem{rem}[thm]{Remark}
\newtheorem{eg}[thm]{Example}
\newtheorem{defn}[thm]{Definition}
\newtheorem{assum}[thm]{Assumption}

\renewcommand {\theequation}{\arabic{section}.\arabic{equation}}
\def\thesection{\arabic{section}}

\title{Large Deviations for Non-Markovian Diffusions
             \\
             and a Path-Dependent Eikonal Equation}
\author{Jin  {\sc Ma}\footnote{University of Southern California, Department of Mathematics, jinma@usc.edu. Research supported in part by NSF grant DMS 1106853.}   
~~
Zhenjie {\sc Ren}\footnote{CMAP, Ecole Polytechnique Paris, ren@cmap.polytechnique.fr. Research supported by grants from R\'egion Ile-de-France.}
~~ Nizar {\sc Touzi}\footnote{CMAP, Ecole Polytechnique Paris, nizar.touzi@polytechnique.edu. Research supported by the ERC 321111 Rofirm, the ANR Isotace, and the Chairs Financial Risks (Risk Foundation, sponsored by Soci\'et\'e G\'en\'erale) and Finance and Sustainable Development (IEF sponsored by EDF and CA). }
   ~~ Jianfeng {\sc Zhang}\footnote{University of Southern California, Department of Mathematics, jianfenz@usc.edu. }
\thanks{We are grateful for Mike Tehranchi who introduced us to the application of large deviations to the problem of implied volatility asymptotics.}
}
	\maketitle

\begin{abstract}
This paper provides a large deviation principle for Non-Markovian, Brownian motion driven stochastic differential equations with random coefficients. Similar to Gao \& Liu \cite{GL}, this extends the corresponding results collected in Freidlin \& Wentzell \cite{FreidlinWentzell}. However, we use a different line of argument,  adapting the PDE method of Fleming \cite{Fleming} and Evans \& Ishii \cite{EvansIshii} to the path-dependent case, by using backward stochastic differential techniques. Similar to the Markovian case, we obtain a characterization of the action function as the unique bounded solution of a path-dependent version of the Eikonal equation. Finally, we provide an application to the short maturity asymptotics of the implied volatility surface in financial mathematics.
\end{abstract}

\noindent{\bf Key words:} Large deviations, backward stochastic differential equations, viscosity solutions of path dependent PDEs.

\noindent{\bf AMS 2000 subject classifications:}  35D40, 35K10, 60H10, 60H30.


\section{Introduction}
\label{sect-Introduction}
\setcounter{equation}{0}

The theory of large deviations is concerned with the rate of convergence of a vanishing sequence of probabilities $\big(\dbP[A_n]\big)_{n\ge 1}$, where $(A_n)_{n\ge 1}$ is a sequence of {\it rare} events. after convenient scaling and normalization, the limit is called {\it rate function}, and is typically represented in terms of a control problem. 

The pioneering work of Freidlin and Wentzell \cite{FreidlinWentzell} considers rare events induced by Markov diffusions. The techniques are based on the Girsanov theorem for equivalent change of measure, and classical convex duality. An important contribution by Fleming \cite{Fleming} is to use the powerful stability property of viscosity solutions in order to obtain a significant simplified approach. We refer to Feng and Kurtz \cite{FengKurtz} for a systematic application of this methodology with relevant extensions. 

The main objective of this paper is to extend the viscosity solutions approach to some problems of large deviations with rare events induced by non-Markov diffusions
 \bea
 \label{SDE}
 X_t
 \;=\;
 X_0+\int_0^t b_s(W,X)ds+\int_0^t\sigma_s(W,X)dW_s,
 &t\ge 0,&
 \eea
where $W$ is a Brownian motion, and $b,\sigma$ are non-anticipative functions of the paths of $(W,X)$ satisfying convenient conditions for existence and uniqueness of the solution of the last stochastic differential equation (SDE).

We should note that the Large Deviation Principle (LDP) for non-Markovian diffusions of  type (\ref{SDE}) is not
new. For example, Gao \& Liu \cite{GL} studied such a problem via the sample path LDP method by Fredlin-Wentzell, using various norms in infinite dimensional spaces. While the techniques there are quite deep and sophisticated, the methodology is more or less ``classical." Our main focus in this work is to extend the  
PDE approach of Fleming \cite{Fleming} in the present path-dependent framework, with a different set of tools. These include the theories of backward SDEs, stochastic control, and the viscosity solution for path-dependent PDEs (PPDEs), among them the last one has been developed
only very recently. Specifically, the theory of backward SDEs, pioneered by Pardoux \& Peng \cite{PardouxPeng}, can be
effectively used as a substitute to the partial differential equations in the Markovian setting. Indeed, the log-transformation of the vanishing probability solves a semilinear PDE in the Markovian case. However, due to the ``functional" nature of the coefficients in (\ref{SDE}), both backward SDE and PDE involved will become non-Markovian and/or path-dependent. 

Several technical points are worth mentioning. First, since the PDE involved in our problem naturally 
has the nonlinearity in the gradient term (quadratic to be specific), we therefore need the extension
 by Kobylanski \cite{Kobylanski} on backward SDEs  to this context.
 Second, in order to obtain the rate function, we exploit the stochastic control representation of the log-transformation, and proceed to the asymptotic analysis with crucial use of the BMO properties of the solution of the BSDE. Finally, we use the notion of viscosity solutions of path-dependent Hamilton-Jacobi equations introduced by Lukoyanov \cite{Lukoyanov} in order to characterize the rate function as unique viscosity solution of a path dependent Eikonal equation.

Another main purpose, in fact the original motivation, of this work is an application in financial mathematics. It has been
known that an important problem in the valuation and hedging of exotic options is to characterize the short time asymptotics of the implied volatility surface,
given the prices of European options for all maturities and strikes. The need to resort to asymptotics is due to the fact that only a discrete set of maturities and strikes are available. This difficulty is bypassed by practitioners by using the asymptotics in order to extend the volatility surface to the un-observed regimes. We refer to Henry-Labord\`ere \cite{Henry-Labordere}. The results available in this literature have been restricted to the Markovian case, and our results  in a sense opens the door to a general non-Markovian, path-dependent paradigm.

We finally observe that the sequence of vanishing probabilities induced by non-Markov diffusions can be re-formulated in the Markov case by using the Gy\"{o}ngy's \cite{Gyongy} result which produces a Markov diffusion with the same marginals. However, the regularity of the coefficients of the resulting Markov diffusion $\sigma^X(t,x):=\dbE[\sigma_t|X_t=x]$ are in general not suitable for the application of the classical large deviation results. 

The paper is organized as follows. Section \ref{sect-formulation} contains the general setting, and provides our main results. First, we solve the small noise large deviation problem for the Laplace transform induced by a non-Markov diffusion. Next, we state the small noise large deviation result for the probability of exiting from some bounded open domain before some given maturity. We then state the characterization of the rate function as a unique viscosity solution of the corresponding path-dependent Eikonal equation. Section \ref{sect:application} is devoted to the application to the short maturity asymptotics of the implied volatility surface. Finally, Sections \ref{sect:Prob1}, \ref{sect:Prob2} and \ref{sect:existence} contain the proofs of our large deviation results, and the viscosity characterization. 

\section{Problem formulation and main results}
\label{sect-formulation}
\setcounter{equation}{0}

Let $\O_d:=\{\o\in C^0([0,T],\dbR^d):\o_0=0\}$ be the canonical space of continuous paths starting from the origin, $B$ the canonical process defined by $B_t:=\omega_t$, $t\in[0,1]$, and $\dbF:=\{\cF_t, t\in[0,T]\}$ the corresponding filtration. We shall use the following notation for the supremum norm:
 \beaa
 \|\omega\|_t:=\sup_{s\in[0,t]}|\omega_s|
 ~~\mbox{and}~~
  \|\omega\|:= \|\omega\|_T
 &\mbox{for all}&
 t\in[0,T],~\omega\in\Omega_d.
 \eeaa
Let $\dbP_0$ be the Wiener measure on $\Omega_d$. For all $\eps\ge 0$, we denote by $\dbP^\eps:=\dbP_0\circ(\sqrt{\eps}B)^{-1}$ the probability measure such that 
 \beaa
 &\big\{W^\e_t
 :=
 \frac{1}{\sqrt{\e}}B_t
 ,0\le t\le T\big\}&
 \mbox{is a}~\dbP^\eps-\mbox{Brownian motion.}
 \eeaa 
Our main interest in this paper is on the solution of the path-dependent stochastic differential equation:
\bea\label{sde}
 dX_t
 =
 b_t(B,X)dt+\sigma_t(B,X)dB_t,\ \ X_0=x_0,
 &&
 \dbP^\e\text{-a.s.}
 \eea
where the process $X$ takes values in $\dbR^n$ for some integer $n>1$, and its paths are in $\O_n:=C^0([0,T],\dbR^n)$. 

The supremum norm on $\O_n$ is also denoted $\|.\|_t$, without reference to the dimension of the underlying space. The coefficients $b:[0,T]\times\O_d\times\O_n\longrightarrow \dbR^{n}$ and $\sigma:[0,T]\times\O_d\times\O_n\longrightarrow \dbR^{n\times d}$ are assumed to satisfy the following conditions which guarantee existence and uniqueness of a strong solution for all $\eps>0$.

\begin{assum}\label{assum:laplace}  
The coefficients $f\in\{b,\sigma\}$ are:
\\
$\bullet$ non-anticipative, i.e. $f_t(\omega,x)=f_t\big((\omega_s)_{s\le t}, (x_s)_{s\le t}\big)$,
\\
$\bullet$ $L-$Lipschitz-continuous in $(\o,x)$, uniformly in $t$, for some $L>0$:
 $$
 \big|f_t(\o,x)-f_t(\o',x')\big|
 \leq
 L(\|\o-\o'\|_t+\|x-x'\|_t);
 ~t\in[0,T],(\omega,x),(\omega',x')\in\Omega_d\times\Omega_n,
 $$
\end{assum}

Under $\dbP^\eps$, the stochastic differential equation \eqref{sde} is driven by a small noise, and our objective is to provide some large deviation asymptotics in the present path-dependent case, which extend the corresponding results of Freidlin \& Wentzell \cite{FreidlinWentzell} in the Markovian case. Our objective is to adapt to our path-dependent case the PDE approach to large deviations of stochastic differential equation as initiated by Fleming \cite{Fleming} and Evans \& Ishii \cite{EvansIshii}, see also Fleming \& Soner \cite{FlemingSoner}, Chapter VII.

\subsection{Laplace transform near infinity}

As a first example, we consider the Laplace transform of some path-dependent random variable $\xi\big((\omega_s)_{s\le T},(x_s)_{s\le T}\big)$ for some final horizon $T>0$:
 \bea
 L^\e_0
 &:=& 
 -\eps \ln \dbE^{\dbP^\e}\Big[e^{-\frac{1}{\e}\xi(B,X)}\Big].
 \eea
In the following statement $\dbL^2_d$ denotes the collection of measurable functions $\alpha:[0,T]\longrightarrow\dbR^d$ such that $\int_0^T[\alpha_t|^2dt<\infty$.
Our first main result is:

\begin{thm}\label{thm:laplace}
Let $\xi$ be a bounded uniformly continuous $\cF_T-$measurable r.v. Then, under Assumption \ref{assum:laplace}, we have:
 \beaa
 L^\e_0
 \longrightarrow 
 L_0:=\inf_{\a\in\dbL^2_d} \ell_0^\a
 &\mbox{as}~\e\to 0,
 ~\mbox{where}&
 \ell_0^\a:=\xi(\o^\a,x^\a)+\frac12\int_0^T|\alpha_t|^2dt,
 \eeaa
and $(\o^\a,x^\a)$ are defined by the controlled ordinary differential equations:
 $$
 \o^\a_t
 =
 \int_0^t \a_sds
 ,~~
 x^\alpha_t
 =
 X_0+\int_0^t b_s(\o^\a,x^\a)ds+\int_0^t\sigma_s(\o^\a,x^\a)d\o^\a_s,
 ~~
 t\in[0,T].
 $$
\end{thm}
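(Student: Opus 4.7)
The plan is to represent $L^\e_0$ as the time-zero value of a quadratic BSDE under $\dbP^\e$, convert it to a stochastic control problem by convex duality, and then analyze the upper and lower bounds separately as $\e\to 0$. Setting $M^\e_t := \dbE^{\dbP^\e}[e^{-\xi(B,X)/\e}\,|\,\cF_t]$ and $Y^\e_t := -\e\ln M^\e_t$, so that $Y^\e_0 = L^\e_0$, the predictable representation property of $B$ under $\dbP^\e$ (for which $\langle B\rangle_t=\e\,tI_d$) gives $dM^\e_t = \zeta^\e_t\,dB_t$; applying It\^o's formula to $y\mapsto-\e\ln y$ shows that $(Y^\e, Z^\e)$ with $Z^\e := -\e\zeta^\e/M^\e$ solves the quadratic BSDE
\begin{equation*}
Y^\e_t = \xi(B,X) - \int_t^T Z^\e_s\cd dB_s - \frac{1}{2}\int_t^T|Z^\e_s|^2\,ds,\qquad\dbP^\e\text{-a.s.}
\end{equation*}
Boundedness of $\xi$ puts this into the scope of Kobylanski's theory, producing a bounded $Y^\e$ with $Z^\e$ in BMO. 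The Legendre identity $\tfrac{1}{2}|z|^2 = \sup_{\alpha}(\alpha\cd z - \tfrac{1}{2}|\alpha|^2)$ together with a Girsanov change of measure then yields the control representation
\begin{equation*}
Y^\e_0 = \inf_{\alpha}\dbE^{\dbQ^{\e,\alpha}}\Bigl[\xi(B,X) + \tfrac{1}{2}\int_0^T|\alpha_s|^2\,ds\Bigr],
\end{equation*}
where the infimum is over adapted $\alpha$ with suitable integrability and under $\dbQ^{\e,\alpha}$ the canonical process decomposes as $B_t = \int_0^t\alpha_s\,ds + \sqrt{\e}\wt W^{\e,\alpha}_t$ for a $\dbQ^{\e,\alpha}$-Brownian motion $\wt W^{\e,\alpha}$.

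For the upper bound I substitute any deterministic $\alpha\in\dbL^2_d$ into the representation. Under $\dbQ^{\e,\alpha}$, Burkholder--Davis--Gundy gives $\|B-\o^\alpha\|_T\to 0$ in every $L^p$, and Lipschitz stability of the SDE (Assumption~\ref{assum:laplace}) combined with Gr\"onwall yields $\|X-x^\alpha\|_T\to 0$ in $L^p$ as well. Bounded uniform continuity of $\xi$ then gives $\dbE^{\dbQ^{\e,\alpha}}[\xi(B,X)]\to\xi(\o^\alpha,x^\alpha)$, hence $\limsup_\e Y^\e_0\le\ell_0^\alpha$. Taking the infimum over $\alpha$ delivers $\limsup_\e L^\e_0\le L_0$.

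For the lower bound, fix $\eta>0$ and pick an $\eta$-optimal adapted $\alpha^\e$. Boundedness of $\xi$ forces $\dbE^{\dbQ^{\e,\alpha^\e}}[\int_0^T|\alpha^\e_s|^2\,ds]\le C$ uniformly in $\e$. Setting $\o^\e_t := \int_0^t\alpha^\e_s\,ds$ and letting $x^{\o^\e}$ denote the pathwise solution of the controlled ODE driven by $\o^\e$, the decomposition $B = \o^\e + \sqrt{\e}\wt W^{\e,\alpha^\e}$ and a Gr\"onwall estimate on $X-x^{\o^\e}$ (which exploits the uniform $L^2$-bound on $\alpha^\e$) yield $\dbE^{\dbQ^{\e,\alpha^\e}}[\|B-\o^\e\|_T+\|X-x^{\o^\e}\|_T] = O(\sqrt{\e})$. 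Uniform continuity of $\xi$ then produces
\begin{equation*}
Y^\e_0+\eta \ge \dbE^{\dbQ^{\e,\alpha^\e}}\Bigl[\xi(\o^\e,x^{\o^\e}) + \tfrac{1}{2}\int_0^T|\alpha^\e_s|^2\,ds\Bigr] - o(1) = \dbE^{\dbQ^{\e,\alpha^\e}}\bigl[\ell_0^{\alpha^\e(\cdot)}\bigr] - o(1) \ge L_0 - o(1),
\end{equation*}
the last step using that for $\dbQ^{\e,\alpha^\e}$-almost every realization $\alpha^\e(\cdot)$ is a bona fide deterministic element of $\dbL^2_d$, so $\ell_0^{\alpha^\e(\cdot)}\ge L_0$ pointwise. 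Sending $\e\to 0$ and then $\eta\to 0$ closes the argument.

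The principal obstacle is the lower-bound step: one has to justify the pathwise ``freezing'' of the random optimal control against the deterministic infimum, together with a small-noise stability estimate for $X$ around $x^{\o^\e}$ that is uniform in the random driver $\alpha^\e$. The BMO/Kobylanski framework of the quadratic BSDE is precisely what supplies the a priori $L^2$-control of $\alpha^\e$ under $\dbQ^{\e,\alpha^\e}$ and legitimizes the Gr\"onwall comparison used above.
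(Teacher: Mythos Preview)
Your overall architecture matches the paper exactly: quadratic BSDE for $Y^\e$, Legendre duality to get a stochastic control representation, deterministic controls for the upper bound, and pathwise freezing of the (near-)optimal control against the deterministic infimum for the lower bound. The upper bound and the pathwise inequality $\ell_0^{\alpha^\e(\omega)}\ge L_0$ are handled correctly.

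There is, however, a genuine gap in the lower-bound stability estimate. The Gr\"onwall comparison for $\delta:=X-x^{\o^\e}$ unavoidably produces a pathwise factor of the form $\exp\big(L\int_0^T(1+|\alpha^\e_s|)\,ds\big)$, since the drift discrepancy carries a term $|\sigma_t(B,X)-\sigma_t(\o^\e,x^{\o^\e})|\,|\alpha^\e_t|\le L(\sqrt{\e}\,\|\wt W\|_t+\|\delta\|_t)|\alpha^\e_t|$. To turn this into $\dbE^{\dbQ^{\e,\alpha^\e}}[\|\delta\|_T]=O(\sqrt{\e})$ you must control $\dbE^{\dbQ^{\e,\alpha^\e}}\big[e^{2L\int_0^T|\alpha^\e_s|\,ds}\big]$ uniformly in $\e$. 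A mere uniform bound on $\dbE^{\dbQ^{\e,\alpha^\e}}\big[\int_0^T|\alpha^\e_s|^2\,ds\big]$ does \emph{not} yield this: one can have $\dbE[\int|\alpha|^2]\le C$ while $\dbE[e^{c\int|\alpha|}]=\infty$. So ``exploits the uniform $L^2$-bound on $\alpha^\e$'' is not enough as stated.

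The paper closes this gap by taking the \emph{exact} minimizer $\alpha^\e=Z^\e$ rather than an $\eta$-optimal control. Rewriting the BSDE under $\dbP^{\e,Z^\e}$ as
\[
Y^\e_t=\xi+\tfrac12\int_t^T|Z^\e_s|^2\,ds-\sqrt{\e}\int_t^T Z^\e_s\cdot d\wt W^\e_s,
\]
one reads off the \emph{BMO} bound $\|Z^\e\|_{\Hbmo^2(\dbP^{\e,Z^\e})}\le 4\|\xi\|_\infty$, uniform in $\e$. The John--Nirenberg inequality for BMO martingales then gives $\dbE^{\dbP^{\e,Z^\e}}\big[e^{\eta\int_0^T|Z^\e_s|^2\,ds}\big]\le C$ for some $\eta>0$ independent of $\e$, and hence the required exponential moment $\dbE^{\dbP^{\e,Z^\e}}\big[e^{2L\int_0^T|Z^\e_s|\,ds}\big]\le C$. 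An arbitrary $\eta$-optimal control has no reason to enjoy this BMO property. The repair is therefore simple: drop the $\eta$-optimal control, use $Z^\e$ directly (you already have it from the BSDE and you know the infimum is attained there), and invoke the BMO estimate---not merely the $L^2$ bound---to justify the Gr\"onwall step.
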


The proof of this result is reported in Section \ref{sect:Prob1}.

\begin{rem}\label{rem:beps}{\rm 
Theorem \ref{thm:laplace} is still valid in the context where the coefficient $b$ depends also on the parameter $\eps$, so that the process $X$ is replaced by $X^\eps$ defined by:
 \beaa
 dX^\eps_t
 =
 b^\eps_t(B,X^\eps)dt+\sigma_t(B,X^\eps)dB_t,\ \ X^\eps_0=x_0,
 &&
 \dbP^\e\text{-a.s.}
 \eeaa
Since this extension will be needed for our application in Section \ref{sect:application}, we provide a precise formulation. Let Assumption \ref{assum:laplace} hold uniformly in $\eps\in [0,1)$, and assume further that $\eps\longmapsto b^\eps$ is uniformly Lipschitz on $[0,1)$. Then the statement of Theorem \ref{thm:laplace} holds with $x^\alpha$ defined by:
 \beaa
 x^\alpha_t
 =
 X_0+\int_0^t b^0_s(\o^\a,x^\a)ds+\int_0^t\sigma_s(\o^\a,x^\a)d\o^\a_s,
 &
 t\in[0,T].&
 \eeaa
This slight extension does not induce any additional technical difficulty in the proof. We shall therefore provide the proof in the context of Theorem \ref{thm:laplace}. 
 }
\end{rem}

\subsection{Exiting from a given domain before some maturity}
\label{subsect:exit}
 
As a second example, we consider the asymptotic behavior of the probability of exiting from some given subset of $\dbR^n$ before the maturity $T$: 
 \bea
 Q^\e_0
 :=
 -\e\ln\dbP^\e[H<T],
 &\mbox{where}&
 H:=\inf\{t>0:X_t\notin O\},
 \eea 
and $O$ is a bounded open set in $\dbR^n$. We also introduce the corresponding subset of paths in $\O_n$:
 \bea
 \cO
 &:=&
 \big\{\o\in\O:\o_t\in O\text{ for all }t\le T\big\}.
 \eea
The analysis of this problem requires additional conditions.

\begin{assum}\label{assum:hitting}
The coefficients $b$ and $\si$ are uniformly bounded, and $\sigma$ is uniformly elliptic, i.e. $a:=\sigma\sigma^{\rm T}$ is invertible with bounded inverse $a^{-1}$.
\end{assum}

The present example exhibits a singularity on the boundary $\partial O$ because $Q^\eps_0$ vanishes whenever the path $\omega$ is started on the boundary $\partial O$. Our second main result is the following.

\begin{thm}\label{thm:exit}
Let $O$ be a bounded open set in $\dbR^n$ with $C^3$ boundary. Then, under Assumptions \ref{assum:laplace} and \ref{assum:hitting}, we have:
 \beaa
 Q^\e_0 
 \longrightarrow Q_0
 :=
 \inf\big\{q_0^\alpha: \a\in\dbL^2_d,~x^\a_{T\wedge\cdot}\notin\cO\big\},
 &\mbox{where}&
 q_0^\alpha:=\frac12 \int_0^T|\a_s|^2ds,
 \eeaa
and $x^\a$ is defined as in Theorem \ref{thm:laplace}.
\end{thm}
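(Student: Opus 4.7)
My plan is to derive the two one-sided inequalities $\liminf_{\e\to 0}Q^\e_0\ge Q_0$ and $\limsup_{\e\to 0}Q^\e_0\le Q_0$ separately, in each case applying Theorem~\ref{thm:laplace} as a black box to a well-chosen bounded continuous functional of $X$. The Laplace-transform statement is converted to probability bounds through Chernoff-style penalizations squeezing the hitting indicator $\1_{\{H<T\}}$ between bounded continuous surrogates, while the $C^3$ smoothness of $\pa O$ and the uniform ellipticity of $\si$ (Assumption~\ref{assum:hitting}) are reserved for a perturbation argument showing that $Q_0$ is unchanged when the infimum is restricted to controls whose deterministic paths \emph{strictly} exit $\ol O$.

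For $\liminf_{\e\to 0}Q^\e_0\ge Q_0$, for each $k\ge 1$ I would introduce
$\phi_k(\omega):=\Phi_k\bigl(\inf_{0\le t\le T}d(\omega_t,O^c)\bigr)$ with $\Phi_k:\dbR\to\dbR_+$ continuous, vanishing on $(-\infty,0]$ and satisfying $\Phi_k(u)\ua+\infty$ as $k\to\infty$ for each $u>0$ (e.g.\ $\Phi_k(u)=(ku_+)\we k^2$). Then $\phi_k$ is bounded continuous on $\O_n$, vanishes exactly on $\{\omega\notin\cO\}$, and $\phi_k(X)$ is bounded uniformly continuous on $\O_d$ thanks to Assumption~\ref{assum:laplace}. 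Since $\1_{\{H<T\}}\le e^{-\phi_k(X)/\e}$, we get $Q^\e_0\ge-\e\ln\dbE^{\dbP^\e}[e^{-\phi_k(X)/\e}]$, and Theorem~\ref{thm:laplace} yields $\liminf_{\e\to 0}Q^\e_0\ge R_k:=\inf_\alpha\{\phi_k(x^\alpha)+q_0^\alpha\}$. The bound $R_k\le Q_0$ is immediate. To prove $R_k\to Q_0$, pick near-optimal $\alpha^k$; boundedness of $q_0^{\alpha^k}$ gives, along a subsequence, weak $\dbL^2_d$-convergence to some $\alpha^*$; since $\o^{\alpha^k}_t=\int_0^t\alpha^k_s\,ds$ then converges uniformly by Arzel\`a--Ascoli, the Lipschitz continuity in Assumption~\ref{assum:laplace} and Gronwall's lemma give $x^{\alpha^k}\to x^{\alpha^*}$ uniformly; the bound $\Phi_k(\inf_t d(x^{\alpha^k}_t,O^c))\le R_k+1/k$ then forces $\inf_t d(x^{\alpha^*}_t,O^c)=0$, i.e.\ $x^{\alpha^*}\notin\cO$; weak lower semicontinuity of $\|\cdot\|_{\dbL^2_d}^2$ yields $Q_0\le q_0^{\alpha^*}\le\liminf_k R_k$.

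For $\limsup_{\e\to 0}Q^\e_0\le Q_0$, the key preparatory step is the identity
$$
Q_0=\inf\bigl\{q_0^\alpha:\exists\,t^*\in[0,T]\text{ with }x^\alpha_{t^*}\in\Int(O^c)\bigr\}.
$$
Given $\alpha\in\dbL^2_d$ with $x^\alpha\notin\cO$, locate an exit time $t^*\in(0,T]$; if the exit is tangential, i.e.\ $x^\alpha_{t^*}\in\pa O$, use the $C^3$ regularity to extend the outward unit normal $\nu$ smoothly near $\pa O$, solve $\si_{t^*}(\o^\alpha,x^\alpha)v=\nu(x^\alpha_{t^*})$ for a bounded $v\in\dbR^d$ via uniform ellipticity of $a=\si\si^\top$, and form $\alpha^{\eta,h}_s:=\alpha_s+\eta v\,\1_{[t^*-h,t^*]}(s)$. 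A first-order expansion of the controlled ODE shows $x^{\alpha^{\eta,h}}_{t^*}=x^\alpha_{t^*}+\eta h\,\nu(x^\alpha_{t^*})+o(\eta h)$, which lies in $\Int(O^c)$ for small $\eta h>0$; meanwhile $q_0^{\alpha^{\eta,h}}-q_0^\alpha=O(\eta\sqrt h+\eta^2 h)\to 0$ as $h\to 0$ with $\eta$ fixed. Once the reduction is in hand, fix a strictly exiting $\alpha$ and choose $r>0$ so that $\|\omega-x^\alpha\|_T<r$ implies $\omega\notin\cO$. For $M>q_0^\alpha$ set $\phi(\omega):=M\chi(\|\omega-x^\alpha\|_T/r)$ with $\chi:[0,\infty)\to[0,1]$ continuous, $\chi\equiv 0$ on $[0,1/2]$ and $\chi\equiv 1$ on $[1,\infty)$. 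Then $e^{-\phi(\omega)/\e}\le\1_{\{\|\omega-x^\alpha\|_T<r\}}+e^{-M/\e}$, so
$$
\dbP^\e[H<T]\ge\dbP^\e\bigl[\|X-x^\alpha\|_T<r\bigr]\ge\dbE^{\dbP^\e}[e^{-\phi(X)/\e}]-e^{-M/\e}.
$$
Theorem~\ref{thm:laplace} applied to the bounded uniformly continuous $\xi:=\phi(X)$, together with the trial choice $\beta=\alpha$ (for which $\phi(x^\alpha)=0$), yields $\limsup_{\e\to 0}Q^\e_0\le q_0^\alpha$, and minimizing over strictly exiting $\alpha$ closes the argument.

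The principal obstacle lies in the upper-bound direction. The raw set $\{x^\alpha\notin\cO\}$ includes merely tangential contacts with $\pa O$, around which no sup-norm neighborhood of $x^\alpha$ is contained in $\cO^c$, so a direct appeal to Theorem~\ref{thm:laplace} is ineffective and the candidate infimum $Q_0$ could in principle fail to be attained by paths producing genuine exits. It is precisely here that the $C^3$ smoothness of $\pa O$ and the uniform non-degeneracy of $\si$ enter jointly: smoothness provides a well-defined outward normal field near $\pa O$, and ellipticity allows that direction to be realized by an admissible velocity perturbation whose energy cost is pushed to zero, ruling out such tangential minimizers.
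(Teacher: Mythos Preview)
Your proposal is essentially correct, but it follows a genuinely different route from the paper's own proof in Section~\ref{sect:Prob2}. Let me compare.

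For the inequality $\limsup_{\e\to 0}Q^\e_0\le Q_0$, the paper does \emph{not} reduce to strictly exiting controls. Instead it builds an explicit barrier: the function $\tilde g^\e(t,x)=\exp\big(-Kd(x,\partial O)/(\e(T_1-t))\big)$ is shown, via It\^o's formula together with the $C^3$ regularity of $\partial O$ and the uniform ellipticity of $\sigma$, to be a $\dbP^\e$-submartingale dominated by $p^\e_t$, yielding the pointwise estimate $Y^\e_t\le Kd(X_t,\partial O)/(T-t)$ (Lemma~\ref{lem:upbound}). BSDE comparison then reduces to Theorem~\ref{thm:laplace} with the bounded terminal datum $Kd(X_{T_1},O^c)/(T-T_1)$, and one finally sends $T_1\uparrow T$. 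Your approach---perturb a tangentially exiting $\alpha$ into a strict exit and then apply a tube estimate through Theorem~\ref{thm:laplace}---is closer to the classical Freidlin--Wentzell line of argument; it avoids the barrier construction entirely, at the price of the extra deterministic perturbation lemma. Both approaches consume the $C^3$ boundary and ellipticity at this step, but in very different ways (PDE submartingale test function versus ODE normal push-out).

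For the inequality $\liminf_{\e\to 0}Q^\e_0\ge Q_0$, the paper penalises with the \emph{terminal} distance $m\,d(X_T,O^c)$, applies Theorem~\ref{thm:laplace}, and then uses ellipticity a second time to correct a near-optimal $\alpha^\rho$ so that $x^{\tilde\alpha}_T\in\partial O$. Your running-minimum penalty $\Phi_k\big(\inf_t d(X_t,O^c)\big)$ together with weak $\dbL^2_d$-compactness of near minimisers achieves the same conclusion without invoking ellipticity, which is a small gain in generality for this half of the argument.

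One point to tighten in your write-up: the limit you take in the perturbation, ``$h\to 0$ with $\eta$ fixed'', makes both the cost increment \emph{and} the displacement $\eta h\,\nu$ vanish. What you actually need (and what your argument delivers) is that for each fixed $h>0$ small enough the perturbed path already lies in $\mathrm{Int}(O^c)$ at $t^*$, so you may simply choose $h$ small enough that the cost increment is below a prescribed tolerance; no limit is required. Also note that evaluating $\sigma_{t^*}(\omega^\alpha,x^\alpha)$ pointwise presumes continuity in $t$ that is not part of Assumption~\ref{assum:laplace}; this is harmless once you spread the perturbation over an interval and use an averaged $\sigma$, but it should be said.
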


The proof of this result is reported in Section \ref{sect:Prob2}.

\begin{rem}\label{rem:beps2}
{\rm
(i) A similar result of Theorem \ref{thm:exit} can be found in Gao-Liu \cite{GL}. However, our proof has a 
completely different flavor and, given the preparation of the PPDE theory, seems to be more direct, whence shorter.

(ii) The condition on the boundary $\partial O$ can be slightly weakened. Examining the proof of Lemma \ref{lem:upbound}, where this condition is used, we see that it is sufficient to assume that $O$ can be approximated from outside by open bounded sets with $C^3$ boundary.
}
\end{rem}

\begin{rem}\label{rem:beps2}
{\rm
The result of Theorem \ref{thm:exit} is still valid in the context of Remark \ref{rem:beps}. This can be immediately verified by examining the proof of Theorem \ref{thm:exit}.
}
\end{rem}

\subsection{Path-dependent Eikonal equation}

We next provide a characterization of our asymptotics in terms of partial differential equations. We refer to Evans \& Ishii \cite{EvansIshii}, Fleming \& Souganidis \cite{FlemingSouganidis}, Evans-Souganidis \cite{EvansSouganidis}, Evans, Souganidis, Fournier \& Willem \cite{EvansSouganidisFournierWillem}, Fleming \& Soner \cite{FlemingSoner}, for the corresponding PDE literature with a derivation by means of the powerful theory of viscosity solutions. 

Due to the path dependence in the dynamics of our state process $X$, and the corresponding limiting system $x^\alpha$, our framework is clearly not covered by any of these existing works. Therefore, we shall adapt the notion of viscosity solutions introduced in Lukoyanov \cite{Lukoyanov}. 

Consider the truncated Eikonal equation:
 \bea\label{PPDE}
 \big\{-\partial_tu-F_{K_0}\big(.,\partial_\omega u,\partial_x u\big)\big\}(t,\omega,x)
 =0
 &\mbox{for}&
 (t,\o,x)\in\Theta^0,
 \eea
where $K_0$ is a fixed parameter, and the nonlinearity $F_{K_0}$ is given by:
 \bea\label{def:F}
 F_{K_0}(\theta,p_\omega,p_x)
 :=
 b(\theta)\cdot p_x
 +\inf_{|a|\leq K_0}\Big\{\frac12 a^2 + a\big(p_\omega+\sigma(\theta)^{\rm T}p_x\big)\Big\},
\eea
for all $\th\in\Th$, $p_\o\in\dbR^d$ and $p_x\in\dbR^n$. Notice that
 \beaa
 F_{K_0}(\theta,p_\omega,p_x)
 \longrightarrow
 b(\th)\cdot p_x 
 -\frac12\big|p_\o+\sigma^{\rm T}p_x\big|^2
 &\mbox{as}&
 K_0\to\infty,
 \eeaa
the equation (\ref{PPDE}) thus leads to a path-dependent Eikonal equation. We note that the truncated feature of the equation \eqref{PPDE} is induced by the fact that the corresponding solution will be shown to be Lipschitz under our assumptions.

\subsubsection{Classical derivatives}

Denote $\hat\Omega:=\O_d\times\O_n$ and $\hat\omega=(\omega,x)$ a generic element of $\hat\Omega$, $\Theta:=[0,T]\times\hat\O$, and $\Theta^0:=[0,T)\times\hat\O$. The set $\Theta$ is endowed with the pseudo-distance
 \beaa
 d(\theta,\theta')
 :=
 |t-t'|
 +\big\|\hat\omega_{t\wedge}-\hat\omega'_{t'\wedge}\big\|
 &\mbox{for all}&
 \theta=(t,\hat\omega),\theta'=(t',\hat\omega')\in\Theta.
 \eeaa
For any integer $k>0$, we denote by $C^0(\Theta,\dbR^k)$ the collection of all continuous function $u:\Theta\longrightarrow\dbR^k$. Notice, in particular, that any $u\in C^0(\Theta,\dbR^k)$ is non-anticipative, i.e. $u(t,\hat\omega)=u(t,(\hat\omega_s)_{s\le t})$ for all $(t,\hat\omega)\in\Theta$.

We denote $\hat\O_K$ as the set of all $K$-Lipschitz paths. For $\theta=(t,\hat\omega)\in\Theta^0$, we denote $\Theta(\theta):=\cup_{K\ge 0}\Theta_K(\theta)$, where:
 \beaa
 \Theta_K(\theta)
 &:=&
 \big\{(t',\hat\omega')\in\Theta:
 t'\ge t,
 ~\hat\omega'_{t\wedge}=\hat\omega_{t\wedge},
 ~\mbox{and}~
 \hat\omega' |_{[t,T]}~\mbox{is $K-$Lipschitz}
 \big\}.
 \eeaa

\begin{defn}
A function $\varphi:\Theta\longrightarrow\dbR$ is said to be $C^{1,1}(\Theta)$ if $\varphi\in C^0(\Theta,\dbR)$, and we may find $\partial_t\varphi\in C^0(\Theta,\dbR)$, $\partial_{\hat\omega} \varphi\in C^0(\Theta,\dbR^{d+n})$, such that for all $\theta=(t,\hat\omega)\in\Theta$:
 \beaa
 \varphi(\theta')
 =
 \varphi(\theta)
 + \partial_t\varphi(\theta)(t'-t)
 + \partial_{\hat\omega} \varphi(\theta)(\hat\omega'_{t'}-\hat\omega_t)
 + \circ_{\hat\omega'}(t'-t)
 &\mbox{for all}&
 \theta'\in\Theta(\theta),
 \eeaa
where $\circ_{\hat\omega'}(h)/h\longrightarrow 0$ as $h\searrow 0$. The derivatives $\partial_\o$ and $\partial_x$ are defined by the natural decomposition $\partial_{\hat\o}\varphi=(\partial_\o\varphi,\partial_x\varphi)^{\rm T}$.
\end{defn}

The last collection of smooth functions will be used for our subsequent definition of viscosity solutions. 

\subsubsection{Viscosity solutions of the path-dependent Eikonal equation}

Let $\Th^0_K:=[0,T)\times\hat\O_K$. The set of test functions is defined for all $K>0$ and $\th\in\Th^0_K$ by:
 \bea
 \underline{\cA}^K u(\theta)
 &:=&
 \big\{\f\in C^{1,1}(\Theta):
         (\f-u)(\theta)=\min_{\theta'\in\Theta_K}(\f-u)(\theta')
 \big\},
 \\
 \overline{\cA}^K u(\theta)
 &:=&
 \big\{\f\in C^{1,1}(\Theta):
         (\f-u)(\theta)=\max_{\theta'\in\Theta_K}(\f-u)(\theta')
 \big\}.
 \eea

\begin{defn}
Let $u:\Theta\longrightarrow\dbR$ be a continuous function.
\\
{\rm (i)} $u$ is a $K$-viscosity subsolution of \eqref{PPDE}, if for all $\theta\in\Th^0_K$, we have 
 \beaa
 \big\{ -\pa_t \f
          -F_{K_0}(.,\pa_{\hat\omega} \f)
 \big\}(\theta) 
 \le 0
 &\mbox{for all}&
  \f\in \underline{\cA}^K u(\th).
  \eeaa
{\rm (ii)} $u$ is a $K$-viscosity supersolution of \eqref{PPDE}, if for all $\theta\in\Th^0_K$, we have
 \beaa
 \big\{ -\pa_t\f
          -F_{K_0}(.,\pa_{\hat\omega} \f)
 \big\}(\theta) 
 \ge 0
 &\mbox{for all}&
  \f\in \overline{\cA}^K u(\th).
  \eeaa
{\rm (iii)} $u$ is a $K$-viscosity solution of \eqref{PPDE} if it is both $K$-viscosity  subsolution and supersolution.
\end{defn}

\subsubsection{Wellposedness of the path-dependent Eikonal equation}

We only focus on the asymptotics of Laplace transform. For simplicity, we adopt the following strengthened version
of Assumption  \ref{assum:laplace}.

\begin{assum}\label{assum:ppde}
The coefficients $b$ and $\sigma$ are bounded and satisfy Assumption \ref{assum:laplace}. 
\end{assum}

A natural candidate solution of equation \eqref{PPDE} is the dynamic version of the limit $L^0$ introduced in Theorem \ref{thm:laplace}:
 \bea\label{potential_sol}
 u(t,\hat\o)
 &:=&
 \inf_{\a\in\dbL^2_d([t,T])} \Big\{\xi^{t,\hat\o}(\hat\o^{\a,t,\hat\o})+\frac12\int_t^{T}|\a_s|^2 ds\Big\},
 ~~(t,\hat\o)\in\Theta,
 \eea
where $\hat\o^{\a,t,\hat\o}:=(\o^{\a,t,\hat\o},x^{\a,t,\hat\o})$ is defined by:
 $$
 \o_s^{\a,t,\hat\o}
 =
 \int_0^s \a_{t+r}dr,
 ~~x_s^{\a,t,\hat\o}
 =
 \int_0^s b_{t+r}(\hat\o\otimes_t\hat\o^{\a,t,\hat\o})dr
 +\int_0^s \si_{t+r}(\hat\o\otimes_t\hat\o^{\a,t,\hat\o})d\o_r^{\a,t,\hat\o},
 $$
with the notation $(\hat\o\otimes_t\hat\o')_s:=\1_{\{s\le t\}}\hat\o_s+\1_{\{s>t\}}\big(\hat\o_t+\hat\o'_{s-t}\big)$, and
 \beaa
 \xi^{t,\hat\o}(\hat\o')
 :=
 \xi\big((\hat\o\otimes_t\hat\o')_{T\wedge\cdot}\big)
 &\mbox{for all}&
 \hat\o,\hat\o'\in\hat\O.
 \eeaa

\begin{thm}\label{thm: viscosity sol}
Let Assumption \ref{assum:ppde} hold true, and let $\xi$ be a bounded Lipschitz function on $\hat\O$. Then, for $K$ and $K_0$ sufficiently large, the function $u$ defined in \eqref{potential_sol} is the unique bounded $K$-viscosity solution of the path-dependent PDE \eqref{PPDE}.
\end{thm}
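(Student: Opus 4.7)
The plan is to split the proof into existence (showing $u$ is a $K$-viscosity solution) and uniqueness (via a comparison principle), with the comparison principle being the substantial part.

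First I would establish the basic regularity of $u$. Under Assumption \ref{assum:ppde} and the Lipschitz property of $\xi$, a standard coupling argument, using the same control $\alpha$ for two different initial data together with Gr\"onwall's inequality on the controlled state equation, shows that $u$ is bounded and Lipschitz in $(t,\hat\omega)$ with respect to the pseudo-distance $d$, with some constant $L_0$ depending only on the bounds and Lipschitz constants of $\xi$, $b$, $\sigma$. A direct consequence is that, for the infimum defining $u$, controls with $|\alpha|$ much larger than $L_0(1+\|\sigma\|_\infty)$ are strictly suboptimal: the quadratic cost $\frac{1}{2}\int|\alpha|^2$ overwhelms the Lipschitz gain in $\xi$. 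Hence, for $K_0$ chosen sufficiently large compared with $L_0$, the truncation in $F_{K_0}$ is inactive at the gradients of $u$, and candidate $\varepsilon$-optimal controls may be taken to lie in $\{|\alpha|\le K_0\}$.

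Next I would prove the dynamic programming principle:
\[
  u(t,\hat\omega)
  \;=\;
  \inf_{\alpha\in\dbL^2_d([t,t+h])}
  \Big\{u\big(t+h,\hat\omega\otimes_t\hat\omega^{\alpha,t,\hat\omega}\big)
  + \frac{1}{2}\int_t^{t+h}|\alpha_s|^2\,ds\Big\},
\]
for all $(t,\hat\omega)\in\Theta^0$ and $h\in(0,T-t]$, by concatenation of controls and continuity of $u$. Fixing $K\ge K_0$, take $\theta\in\Theta^0_K$ and $\varphi\in\underline{\cA}^K u(\theta)$. For a constant control $\alpha\equiv a$ with $|a|\le K$, the resulting path lies in $\Theta_K(\theta)$, so the DPP upper bound together with the minimality of $\varphi-u$ at $\theta$ yields $\varphi(t,\hat\omega)-\varphi(t+h,\,\cdot\,)\le \frac{1}{2}a^2 h$. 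Expanding $\varphi$ to first order, dividing by $h$, letting $h\downarrow 0$, and taking the infimum over $|a|\le K$ gives $-\partial_t\varphi - F_K(\cdot,\partial_\omega\varphi,\partial_x\varphi)\le 0$ at $\theta$; since $F_K\le F_{K_0}$ for $K\ge K_0$, the subsolution inequality follows. The supersolution property is obtained by the symmetric argument using $\varepsilon$-optimal controls in the DPP; thanks to the Lipschitz property of $u$, these controls can be taken to be bounded by $K$, so that the associated trajectory stays in $\Theta_K(\theta)$.

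For uniqueness I would establish the following comparison principle: if $v$ is a bounded upper semicontinuous $K$-subsolution and $w$ is a bounded lower semicontinuous $K$-supersolution of \eqref{PPDE} with $v(T,\cdot)\le w(T,\cdot)$, then $v\le w$ on $\Theta$. Adapting Lukoyanov \cite{Lukoyanov}, I would use a doubling-of-variables argument: for small parameters $\eta,\lambda>0$, consider the auxiliary functional
\[
  \Phi(\theta,\theta')
  \;=\;
  v(\theta) - w(\theta') - \frac{1}{2\eta}\Big(\|\hat\omega_{t\wedge\cdot} - \hat\omega'_{t'\wedge\cdot}\|^2 + (t-t')^2\Big) - \lambda(T-t),
\]
on $\Theta_K\times\Theta_K$, combined with a perturbation that smooths the supremum-norm penalty into an admissible $C^{1,1}$ test function. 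Writing the sub- and supersolution inequalities at the approximate maxima and exploiting that $F_{K_0}$ is globally Lipschitz in its gradient variables (by construction of the truncation), one then sends $\eta,\lambda\to 0$ to obtain a contradiction unless $v\le w$. Applying comparison in both directions then forces $u$ to coincide with any other bounded $K$-viscosity solution.

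The main obstacle will be this last step: the supremum norm appearing in $d$ is not differentiable with respect to perturbations of the path, so the penalty $\|\hat\omega-\hat\omega'\|^2$ does not directly yield $C^{1,1}$ test functions in the sense of our definition. Circumventing this requires a careful smoothing along the time variable and a localization that exploits the restriction to $K$-Lipschitz paths, which keeps the oscillation of paths and their derivatives uniformly controlled so that one can extract classical derivatives at the approximate maxima. Once comparison is established, the existence statement of Steps 1--3 identifies $u$ as the unique bounded $K$-viscosity solution for $K\ge K_0$ and $K_0$ large enough.
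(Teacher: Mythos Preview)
Your overall plan---regularity of $u$, dynamic programming, verification of the sub/supersolution properties, then comparison---matches the paper's architecture. The paper, however, does not attempt to prove comparison: it simply observes that the truncated Hamiltonian $F_{K_0}$ falls within the scope of Lukoyanov's framework and cites \cite{Lukoyanov} for the comparison result. Your proposed doubling-of-variables argument is therefore extra work; it is not wrong, but it is not what the authors do, and the difficulty you correctly flag (non-smoothness of the sup-norm penalty) is precisely the one Lukoyanov handles.

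Where your proposal has a genuine gap is the supersolution step. You write that, by the Lipschitz property of $u$, ``$\varepsilon$-optimal controls may be taken to lie in $\{|\alpha|\le K_0\}$''. The Lipschitz bound on $u$ only gives an a priori bound on $\int_t^T|\alpha_s|^2\,ds$ for near-optimal controls; it does \emph{not} give a pointwise (i.e.\ $L^\infty$) bound on $\alpha$. A near-optimal $\alpha$ could have a tall narrow spike with small $L^2$ cost, and then the associated path $\hat\omega^{\alpha,t,\hat\omega}$ need not stay in $\hat\Omega_K$, so you cannot invoke $\varphi\ge u$ along it and the test-function comparison breaks down. This is exactly the point where the paper invests real effort: it introduces the auxiliary value $u_K(t,\hat\omega):=\inf_{\|\alpha\|_\infty\le K}\{\cdots\}$ and proves a separate proposition showing $u=u_{K_0}$ for some finite $K_0$. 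The argument is not a soft consequence of Lipschitz regularity---it establishes \emph{existence} of an optimal control $\alpha^K$ for $u_K$ via weak $L^2$-compactness and Mazur's lemma, and then uses the dynamic programming identity along the optimal trajectory together with the Lipschitz estimate on $u_K$ to extract a uniform bound $\|\alpha^K\|_\infty\le C'$ independent of $K$. Only after this is in hand can one run the DPP over $\{\|\alpha\|_\infty\le K_0\}$, guarantee that trajectories remain $K$-Lipschitz for $K:=C(1+K_0)$, and carry out the supersolution verification.

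A minor related point: in your subsolution step you test with constant controls $|a|\le K$, but a control of size $K$ generates a path with speed of order $C(1+K)>K$, so it does not stay in $\Theta_K(\theta)$. The correct bookkeeping (as in the paper) is to test with $|a|\le K_0$ and take $K:=C(1+K_0)$; this yields the $F_{K_0}$ inequality directly, and your detour through ``$F_K\le F_{K_0}$'' is unnecessary.
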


The proof of this result is reported in Section \ref{sect:existence}.

\section{Application to implied volatility asymptotics}
\label{sect:application}

\subsection{Implied volatility surface}

The Black-Scholes formula $\mbox{BS}(K,\si^2T)$ expresses the price of a European call option with time to maturity $T$ and strike $K$ in the context of a geometric Brownian motion model for the underlying stock, with volatility parameter $\sigma\ge 0$:
 \beaa
 \widehat{\mbox{BS}}(k,v)
 \;:=\;
 \frac{\mbox{BS}(K,v)}{S_0}
 &:=&
 \left\{\begin{array}{l}
         (1-e^k)^+~~\mbox{for}~~v=0,
         \\
         \mathbf{N}\big(d_+(k,v)\big)-e^{k}\mathbf{N}\big(d_-(k,v)\big),
         ~~\mbox{for}~~v>0,
         \end{array}
 \right.
 \eeaa
where $S_0$ denotes the spot price of the underlying asset, $v:=\sigma^2 T$ is the total variance, $k:=\ln(K/S_0)$ is the log-moneyness of the call option, $\mathbf{N}(x):=(2\pi)^{-1/2}\int_{-\infty}^xe^{-y^2/2}dy$,
 \beaa
 d_\pm(k,v):=\frac{-k}{\sqrt{v}}\pm\frac{\sqrt{v}}{2},
 \eeaa
and the interest rate is reduced to zero.

We assume that the underlying asset price process is defined by the following dynamics under the risk-neutral measure $\dbP_0$:
 \beaa
 dS_t
 =
 S_t\sigma_t(B,S)dB_t,
 &\dbP_0-\mbox{a.s.}&
 \eeaa
so that the price of the $T-$maturity European call option with strike $K$ is given by $\dbE^{\dbP_0}\big[(S_T-K)^+\big]$. The implied volatility surface $(T,k)\longmapsto\Sigma(T,k)$ is then defined as the unique non-negative solution of the equation
 \beaa
 \mathbf{N}\big(d_+(k,\Sigma^2T)\big)
             -e^{k}\mathbf{N}\big(d_-(k,\Sigma^2T)\big)
 &=&
 \hat C(T,k)
 \;:=\;
 \dbE^{\dbP_0}\big[\big(e^{X_T}-e^{k}\big)^+\big],
 \eeaa
where $X_t:=\ln{(S_t/S_0)}$, $t\ge 0$. 

Our interest in this section is on the short maturity asymptotics $T\searrow 0$ of the implied volatility surface $\Sigma(T,k)$ for $k>0$. This is a relevant practical problem which is widely used by derivatives traders, and has induced an extensive literature initiated by Berestycki, Busca \& Florent \cite{BerestyckiBuscaFlorent1,BerestyckiBuscaFlorent2}. See e.g. Henry-Labord\`ere \cite{Henry-Labordere}, Hagan, Lesniewski, \& Woodward \cite{HaganLesniewskiWoodward}, Ford and Jacquier \cite{FordJacquier}, Gatheral, Hsu, Laurence, Ouyang \& Wang \cite{GatheralHsuLaurenceOuyang}, Deuschel, Friz, Jacquier \& Violante \cite{DeuschelFrizJacquierViolante1,DeuschelFrizJacquierViolante2}, and Demarco \& Friz \cite{DemarcoFriz}.

Our starting point is the following limiting result which follows from standard calculus:
 \beaa
 \lim_{v\rightarrow 0} v\ln{\widehat{\mbox{BS}}(k,v)}
 \;=\;
 -\frac{k^2}{2},
 &\mbox{for all}&
 k>0.
 \eeaa
We also compute directly that, for $k>0$, we have $\hat C(T,k)\longrightarrow 0$ as $T\searrow 0$. Then $T\Sigma(T,k)^2 \longrightarrow 0$ as $T\searrow 0$, and it follows from the previous limiting result that
 \bea\label{impliedvol-asymptotics}
 \lim_{T\rightarrow 0}T\Sigma(T,k)^2\ln\hat C(T,k)
 \;=\;
 -\frac{k^2}{2},
 &\mbox{for all}&
 k>0.
 \eea
Consequently, in order to study the asymptotic behavior of the implied volatility surface $\Sigma(T,k)$ for small maturity $T$, we are reduced to the asymptotics of $T\ln\hat C(T,k)$ for small $T$, which will be shown in the next subsection to be closely related to the large deviation problem of Subsection \ref{subsect:exit}. Hence, our path-dependent large deviation results enable us to obtain the short maturity asymptotics of the implied volatility surface in the context where the underlying asset is a non-Markovian martingale under the risk-neutral measure. 

\subsection{Short maturity asymptotics}

Recall the process $X_t:=\ln (S_t/S_0)$. By It\^o's formula, we deduce the dynamic for process $X$:
 \be\label{SDE-finance}
 dX_t
 \;=\;
 -\frac12 \si^X_t(B,X)^2 d\langle B\rangle_t
 +\si^X_t(B,X)dB_t,
 \ee
where $\si^X(\o,x):=\si\big(\o,S_0e^{x_\cdot}\big)$. For the purpose of the application in this section, we need to convert the short maturity asymptotics into a small noise problem, so as to apply the main results from the previous section. In the present path-dependent case, this requires to impose a special structure on the coefficients of the stochastic differential equation \eqref{SDE-finance}. 

For a random variable $Y$ and a probability measure $\dbP$, we denote by $\cL^{\dbP}(Y)$ the $\dbP-$distribution of $Y$.

\begin{assum}\label{assum:application}
The diffusion coefficient $\si^X:[0,T]\times\O_d\times\O_n\longrightarrow\dbR$ is non-anticipative, Lipschitz-continuous, takes values in $[\ul\si,\ol\si]$ for some $\ol\si\geq \ul\si>0$, and satisfies the following small-maturity small-noise correspondence:
  \beaa
  \cL^{\dbP_0}(X_\eps)
  \;=\;
  \cL^{\dbP^\eps}(X_1)
  &\mbox{for all}&
  \eps\in[0,1).
  \eeaa
\end{assum}

\begin{rem}{\rm 
Assume that $\sigma$ is independent of $\o$ and satisfies the following time-indifference property:
 \beaa
 \si^X_{ct}(x)
 =
 \si^X_t(x^c)
 ~~\mbox{for all}~~c>0,
 &\mbox{where}&
 x^c_s:=x_{cs},~~s\in[0,T].
 \eeaa
Then, $\cL^{\dbP_0}\big((X_s)_{s\le\eps}\big)=\cL^{\dbP^\eps}\big((X_s)_{s\le 1}\big)$ for all $\eps\in[0,1)$, which implies that the small-maturity small-noise correspondence holds true. In particular, the time-indifference property holds in the homogeneous Markovian case $\sigma_t(x)=\sigma(x_t)$.
}
\end{rem}

In view of \eqref{impliedvol-asymptotics} and the small-maturity small-noise correspondence of Assumption \ref{assum:application}, we are reduced  to the asymptotics of
 \beaa
 \e\ln \dbE^{\dbP^\e}[(e^{X_1}-e^k)^+]
 &\mbox{as}&
 \e\rightarrow 0.
 \eeaa
Under $\dbP^\e$ the dynamics of $X$ is given by the stochastic differential equation:
 \beaa
 dX_t
 \;=\;
 -\frac{\e}{2}\; \si^X_t(B,X)^2 dt
 +\si^X_t(B,X)dB_t,
 &\dbP^\e-\mbox{a.s.}&
 \eeaa
whose coefficients satisfy the conditions given in Remarks \ref{rem:beps} and \ref{rem:beps2}. Consider the stopping time 
 \beaa
 H_{a,b}
 \;:=\;
 \inf\{t: X_t\not\in (a,b)\}
 &\mbox{for}&
 -\infty<a<b<+\infty.
 \eeaa
Then, it follows from Theorem \ref{thm:exit} and Remark \ref{rem:beps2} that
 \beaa
 Q^\eps_0:=-\eps\ln{\dbP^\eps\big[H_{a,b}\leq 1\big]}
 \longrightarrow
 Q_0(a,b)
 &\mbox{as}&
 \eps\searrow 0,
 \eeaa
where $Q_0(a,b)$ is defined as in Theorem \ref{thm:exit} in terms of the controlled function $x^\a$ of Theorem \ref{thm:laplace}:
 \beaa
 Q_0(a,b)
 :=
 \inf\Big\{\frac12 \int_0^1|\a_s|^2ds: \a\in\dbL^2_d,~x^\a_{1\wedge\cdot}\notin \cO_{a,b}\Big\},
 \eeaa
 where $\cO_{a,b}:=\big\{x:x_t\in ( a,  b)~\mbox{for all}~t\in[0,1]\big\}$. The rest of this section is devoted to the following result.

\begin{prop}
$\lim_{\e\rightarrow 0}-\e\ln \dbE^{\dbP^\e}[(e^{X_1}-e^k)^+]=Q_0(k):=\lim_{a\rightarrow -\infty}Q_0(a,k)$.
\end{prop}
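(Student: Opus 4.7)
The identity is established by matching upper and lower bounds on $-\e\ln\dbE^{\dbP^\e}[(e^{X_1}-e^k)^+]$, combining an exit-time comparison with a Girsanov change of measure.

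For the liminf direction $\liminf_{\e\to 0}-\e\ln\dbE^{\dbP^\e}[(e^{X_1}-e^k)^+]\ge Q_0(k)$, I exploit that, under Assumption \ref{assum:application}, It\^o's formula shows $e^{X}=S/S_0$ is a $\dbP^\e$-martingale, hence $\dbE^{\dbP^\e}[e^{X_1}]=1$. Define the equivalent measure $\dbQ^\e$ by $d\dbQ^\e/d\dbP^\e:=e^{X_1}$. Girsanov then gives that under $\dbQ^\e$, the process $X$ satisfies $dX_t=\tfrac{\e}{2}(\si^X_t)^2\,dt+\si^X_t\,d\tilde B_t$ for some $\dbQ^\e$-Brownian motion $\tilde B$ with $\langle\tilde B\rangle_t=\e t$. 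For any $a<0$,
\[
\dbE^{\dbP^\e}[(e^{X_1}-e^k)^+]\le\dbE^{\dbP^\e}[e^{X_1}\mathbf{1}_{\{X_1>k\}}]=\dbQ^\e[X_1>k]\le\dbQ^\e[H_{a,k}\le 1],
\]
and the coefficients $(b^\e,\si)=(\tfrac{\e}{2}(\si^X)^2,\si^X)$ satisfy the hypotheses of Theorem \ref{thm:exit} together with Remarks \ref{rem:beps}--\ref{rem:beps2} (the limit drift being $b^0\equiv 0$). Hence $-\e\ln\dbQ^\e[H_{a,k}\le 1]\to Q_0(a,k)$ as $\e\to 0$, and letting $a\to-\infty$ yields the claim.

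For the limsup direction $\limsup_{\e\to 0}-\e\ln\dbE^{\dbP^\e}[(e^{X_1}-e^k)^+]\le Q_0(k)$, fix $\eta>0$ and build $\d>0$ and $\a^*\in\dbL^2_d$ such that $x^{\a^*}_1>k+\d$ and $\tfrac12\int_0^1|\a^*_s|^2\,ds\le Q_0(k)+\eta$. This $\a^*$ is constructed by taking a near-minimizer of $Q_0(a,k+2\d)$ for $a$ sufficiently negative: the corresponding controlled path must exit $(a,k+2\d)$ through the upper boundary at some $\tau\le 1$ (the cost of reaching a very negative $a$ diverges as $a\to-\infty$), and extending the control by zero on $[\tau,1]$ freezes the path at $k+2\d$ since $b^0\equiv 0$; the monotonicity $Q_0(a,b)\le Q_0(b)$ together with right-continuity of $k\mapsto Q_0(k)$ ensures the cost stays within $\eta$ of $Q_0(k)$. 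Next, define $\tilde\dbP^\e$ via
\[
\frac{d\tilde\dbP^\e}{d\dbP^\e}=\exp\!\Bigl(\tfrac{1}{\sqrt\e}\!\int_0^1\a^*_s\,dW^\e_s-\tfrac{1}{2\e}\!\int_0^1|\a^*_s|^2\,ds\Bigr);
\]
under $\tilde\dbP^\e$ one has $B_t=\int_0^t\a^*_s\,ds+\sqrt{\e}\,\tilde W^\e_t$, and a standard Gronwall estimate gives $X\to x^{\a^*}$ uniformly on $[0,1]$ in $\tilde\dbP^\e$-probability, so $\tilde\dbP^\e[X_1>k+\d/2]\to 1$. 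Combining $\dbE^{\dbP^\e}[(e^{X_1}-e^k)^+]\ge(e^{k+\d/2}-e^k)\,\dbP^\e[X_1>k+\d/2]$ with Jensen's inequality applied to the relative density yields, for $A=\{X_1>k+\d/2\}$,
\[
-\e\ln\dbP^\e[A]\le-\e\ln\tilde\dbP^\e[A]+\sqrt\e\,\dbE^{\tilde\dbP^\e}\!\Bigl[\!\int_0^1\a^*_s\,d\tilde W^\e_s\,\Big|\,A\Bigr]+\tfrac12\int_0^1|\a^*_s|^2\,ds.
\]
The first two terms on the right vanish as $\e\to 0$ (the martingale term by Cauchy--Schwarz using $\tilde\dbP^\e[A]\to 1$ and the $\tilde\dbP^\e$-variance $\int_0^1|\a^*_s|^2\,ds$ of the It\^o integral), while $-\e\ln(e^{k+\d/2}-e^k)\to 0$. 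Thus $\limsup_\e-\e\ln\dbE^{\dbP^\e}[(e^{X_1}-e^k)^+]\le Q_0(k)+\eta$, and sending $\eta\downarrow 0$ completes the argument.

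The main obstacle is the construction of $\a^*$ with the strict terminal overshoot $x^{\a^*}_1>k+\d$: the definition of $Q_0(a,k)$ only asserts exit from $(a,k)$, not that the terminal value lies strictly beyond the upper boundary. The zero-drift structure $b^0\equiv 0$, stemming from the specific form of the SDE for $X$ under Assumption \ref{assum:application}, is essential since it permits freezing $x^{\a^*}$ at the exit level by switching the control to zero on the remainder of $[0,1]$. A secondary, minor point is verifying the right-continuity of $k\mapsto Q_0(k)$, which follows from a standard perturbation of the controlled ODE using uniform ellipticity of $\si^X$.
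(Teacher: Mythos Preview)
Your approach diverges from the paper's in both halves, and the lower-bound side (your ``liminf direction'') carries a gap that deserves attention.

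For $\liminf_{\e\to 0}-\e\ln\dbE^{\dbP^\e}[(e^{X_1}-e^k)^+]\ge Q_0(k)$, the paper stays under $\dbP^\e$ throughout: it applies H\"older's inequality
\[
\dbE^{\dbP^\e}\big[e^{X_1}\1_{\{X_1\ge k\}}\big]\le\dbE^{\dbP^\e}[e^{qX_1}]^{1/q}\,\dbP^\e[H_{a,k}\le 1]^{1/p},
\]
bounds the moment uniformly in $\e$, invokes Theorem~\ref{thm:exit} for $\dbP^\e[H_{a,k}\le 1]$, and sends $p\to 1$. Your Girsanov route to $\dbQ^\e$ does not land inside the hypotheses of Theorem~\ref{thm:exit} as stated: under $\dbQ^\e$ the coefficient $\si^X_t(B,X)$ still depends on the \emph{original} process $B$, while the driving small noise is $\tilde B=B-\e\int_0^\cdot\si^X_s(B,X)\,ds$. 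Rewriting $\si^X$ as a functional of $(\tilde B,X)$ makes the diffusion coefficient $\e$-dependent, and Remarks~\ref{rem:beps}--\ref{rem:beps2} only allow $\e$-dependence in the drift. The perturbation is uniformly $O(\e)$, so this is likely repairable by revisiting the proof of Theorem~\ref{thm:exit}, but it is a genuine verification you have not carried out. (A shorter alternative to both arguments: optional stopping of the $\dbP^\e$-martingale $e^X$ at $H_{a,k}\wedge 1$ gives $\dbE^{\dbP^\e}[(e^{X_1}-e^k)^+]\le e^k\,\dbP^\e[H_{a,k}\le 1]$ directly.)

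For the reverse inequality the paper proceeds very differently from you: it observes that $f_n(e^X):=(e^{-n}-e^X)^++(e^X-e^k)^+$ is a $\dbP^\e$-submartingale, applies Doob's maximal inequality to obtain $\dbP^\e[H_{a_{n,\d},k_\d}\le 1]\le\d^{-1}\dbE^{\dbP^\e}[f_n(e^{X_1})]$, and then uses explicit Black--Scholes-type comparisons (via $\ul\si\le\si^X\le\ol\si$) to show the put contribution is negligible relative to the call. Your tilting argument is the classical Freidlin--Wentzell lower-bound scheme and is essentially correct; the price you pay is the construction of $\a^*$ with strict overshoot $x^{\a^*}_1>k+\d$, which leans on $b^0\equiv 0$ and on right-continuity of $k\mapsto Q_0(k)$, both of which you correctly identify as the crux. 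The paper's submartingale trick has the aesthetic advantage of reducing this half, too, to another application of Theorem~\ref{thm:exit}, so the entire proposition rests on the exit-probability machinery already built.
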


\begin{proof}
\no {\bf 1.}\quad We first show that
 \bea\label{limsup_app}
 \limsup_{\e\rightarrow 0}\e\ln \dbE^{\dbP^\e}[(e^{X_1}-e^k)^+]
 &\le&
 \;-Q_0(k).
\eea
Fix some $p>1$ and the corresponding conjugate $q>1$ defined by $\frac{1}{p}+\frac{1}{q}=1$. By the H\"{o}lder inequality, we estimate that
 $$
 \dbE^{\dbP^\e}\big[(e^{X_1}-e^k)^+\big]
 \le
 \dbE^{\dbP^\e}\Big[e^{X_1}\1_{\{X_1\geq k\}}\Big]
 \le 
 \dbE^{\dbP^\eps}\big[e^{qX_1}\big]^{1/q}\dbP^\e[H_{a,k}\leq 1]^{1/p},
 ~\mbox{for all}~a<k. 
 $$
By standard estimates, we may find a constant $C_p$ such that $\dbE^{\dbP^\e}\big[e^{qX_1}\big]
 \le 
 C_p$ for all $\e\in(0,1)$. Then, 
 \beaa
	\e \ln \dbE^{\dbP^\e}\big[(e^{X_1}-e^k)^+\big]
	&\le&
    \frac{\e}{q} \ln{C_p} + \frac{\e}{p}\ln\dbP^\e[H_{a,k}\leq 1],
 \eeaa
which provides \eqref{limsup_app} by sending $\e\rightarrow 0$ and then $p\rightarrow 1$.

\no {\bf 2.}\quad We next prove the following inequality: 
 \bea\label{liminf_app}
 \liminf_{\e\rightarrow 0}\e\ln \dbE^{\dbP^\e}[(e^{X_1}-e^k)^+]
 &\ge& 
 -Q_0(k).
 \eea
For $n\in\dbN$, denote $f_n(x):=(e^{-n}-x)^+ +(x-e^k)^+$ for $x\in\dbR$.
Since $f_n$ is convex and $e^X$ is $\dbP^\e$-martingale, the process $f\big(e^X\big)$ is a non-negative $\dbP^\e$-submartingale. For a sufficiently small $\d>0$, set $a_{n,\d}:=\ln(e^{-n}-\d)$ and $k_\d:=\ln(e^k+\d)$. Then, it follows from the Doob inequality that 
 \be\label{Doobineq}
 \dbP^\e[H_{a_{n,\d},k_\d}\leq 1]
 \;=\; 
 \dbP^\e\Big[\max_{t\le 1}f_n\big(e^{X_t}\big)\ge\d\Big]
 \;\le\; 
 \frac{1}{\d}\;\dbE^{\dbP^\e}\big[f_n\big(e^{X_1}\big)\big].
 \ee
We shall prove in Step 3 below that
 \bea\label{limsupbounded}
 \lim_{\e\rightarrow 0}
 \frac{\dbE^{\dbP^\e}[(e^{-n}-e^{X_1})^+]}
        {\dbE^{\dbP^\e}[(e^{X_1}-e^k)^+]}
 =0
 &\mbox{for large}&
 n.
 \eea
Then, it follows from \eqref{Doobineq}, by sending $\e\rightarrow 0$, that
 \beaa
 -Q_0(a_{n,\d},k_\d)
 &\leq&
 \liminf_{\e\rightarrow 0} \e\ln\dbE^{\dbP^\e}[(e^{X_1}-e^k)^+].
 \eeaa
Finally, sending $\d\rightarrow 0$ and then $n\rightarrow\infty$, we obtain (\ref{liminf_app}).

\no {\bf 3.}\quad It remains to prove \eqref{limsupbounded}. Since $\ul\si \leq \si \leq \ol\si$, by Assumption \ref{assum:application}, it follows from the convexity of $s\longmapsto (e^{-n}-s)^+$ and $s\longmapsto (s-e^k)^+$ that
 \beaa
 \frac{\dbE^{\dbP^\e}[(e^{-n}-e^{X_1})^+]}{\dbE^{\dbP^\e}[(e^{X_1}-e^k)^+]}
 \;\leq\;
 \frac{\dbE^{\dbP^\e}[(e^{-n}-e^{-\frac12 \e\ol\si^2+\ol\si B_1})^+]}
        {\dbE^{\dbP^\e}[(e^{-\frac12 \e\ul\si^2+\ul\si B_1}-e^k)^+]}.
\eeaa
Further, we have
 \beaa
 \dbE^{\dbP^\e}\big[\big(e^{-n}-e^{-\frac12 \e\ol\si^2+\ol\si B_1}\big)^+\big]
 &\le& 
 e^{-n}\mathbf{N}\Big(\frac12\ol\si\sqrt\e-\frac{n}{\ol\si\sqrt\e}\Big),
 \eeaa
and, by the Chebyshev inequality,
 $$
 \dbE^{\dbP^\e}[(e^{-\frac12 \e\ul\si^2+\ul\si B_1}-e^k)^+]
 \geq 
 \l\dbP^\e[e^{-\frac12 \e\ul\si^2+\ul\si B_1}\geq e^k+\l]\\
 =
 \l\mathbf{N}\Big(-\frac12\ul\si\sqrt\e-\frac{\ln(e^k+\l)}{\ul\si\sqrt\e}\Big).
 $$
Using the estimate $\mathbf{N}(-x)\sim\frac{1}{\sqrt{2\pi}}x^{-1}e^{-\frac{x^2}{2}}$, we obtain that
 \beaa
 \limsup_{\e\rightarrow 0}
 \frac{\dbE^{\dbP^\e}[(e^{-n}-e^{X_1})^+]}
        {\dbE^{\dbP^\e}[(e^{X_1}-e^k)^+]}
 &\leq&
 C \exp\Big\{-\lim_{\e\rightarrow 0} \frac{1}{2\e}
                                                     \Big(\frac{n^2}{\ol\si^2}-\frac{(\ln(e^k+\l))^2}{\ul\si^2}\Big)
           \Big\}
 \;=\;
 0,
\eeaa
whenever $n^2>\frac{\ol\si^2}{\ul\si^2}(\ln(e^k+\l))^2$.  
\end{proof}

\section{Asymptotics of Laplace transforms}
\label{sect:Prob1}

Our starting point is a characterization of $Y^\e_0$ in terms of a quadratic backward stochastic differential equation. Let
 \bea
 Y^\e_t:=-\eps \ln \dbE^{\dbP^\e}_t\Big[e^{-\frac{1}{\e}\xi(B,X)}\Big],
 &t\in[0,T].&
 \eea
where $\dbE^{\dbP^\e}_t$ denotes expectation operator under $\dbP^\e$, conditional to $\cF_t$.

\begin{prop}\label{bsde}
The processes $Y^\eps$ is bounded by $\|\xi\|_\infty$, and is uniquely defined as the bounded solution of the quadratic backward stochastic differential equation
 \beaa
 Y^\e_t
 &=&
 \xi-\frac12\int_t^T\big|Z^\e_s\big|^2ds
 +\int_t^T Z^\e_s \cdot dB_s,
 ~~\dbP^\e-\mbox{a.s.}
 \eeaa
Moreover, the process $Z^\e$ satisfies the BMO estimate
 \bea
 \|Z\|_{\Hbmo^2(\dbP^\e)}
 :=
 \sup_{t\in[0,T]}
 \Big\|
 \dbE^{\dbP^\e}_t\int_t^T \big|Z^\e_s\big|^2ds
 \Big\|_{\dbL^\infty(\dbP^\eps)}
 &\le&
 4\|\xi\|_\infty.
 \eea
\end{prop}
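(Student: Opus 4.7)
My plan is to exploit the exponential log-transform, which linearizes the quadratic BSDE. Set $P^\eps_t := \dbE^{\dbP^\eps}_t\big[e^{-\xi/\eps}\big]$; because $\xi$ is bounded, $P^\eps$ is a strictly positive bounded $\dbP^\eps$-martingale squeezed between $e^{-\|\xi\|_\infty/\eps}$ and $e^{\|\xi\|_\infty/\eps}$, so $Y^\eps_t := -\eps\ln P^\eps_t$ is well-defined, continuous, and $|Y^\eps_t| \le \|\xi\|_\infty$. The martingale representation theorem, applied under $\dbP^\eps$ relative to the $\dbP^\eps$-Brownian motion $W^\eps := B/\sqrt{\eps}$, produces a predictable process $\tilde Z^\eps$ with $dP^\eps_t = \tilde Z^\eps_t\,dB_t$. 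Keeping in mind that $d\langle B\rangle_t = \eps\,dt$ under $\dbP^\eps$, Itô's formula applied to $-\eps\ln P^\eps_t$ gives a first-order martingale part $-\eps\tilde Z^\eps_t/P^\eps_t\,dB_t$ and a second-order drift $\eps^2|\tilde Z^\eps_t|^2/(2(P^\eps_t)^2)\,dt$; setting $Z^\eps_t := \eps\tilde Z^\eps_t/P^\eps_t$ turns the drift into $\tfrac12|Z^\eps_t|^2\,dt$, and integrating from $t$ to $T$ with $Y^\eps_T = \xi$ delivers the BSDE of the statement.

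For uniqueness I reverse the transform. Let $(Y,Z)$ be any bounded solution of the BSDE. Applying Itô's formula to $e^{-Y_t/\eps}$ (using once again $d\langle B\rangle_t = \eps\,dt$), the first-order and second-order contributions cancel exactly, leaving
\[
e^{-Y_t/\eps} = e^{-Y_0/\eps} + \int_0^t \frac{e^{-Y_s/\eps}}{\eps}\,Z_s\,dB_s.
\]
Since $Y$ is bounded, this local martingale is bounded, hence a genuine $\dbP^\eps$-martingale with terminal value $e^{-\xi/\eps}$. Consequently $e^{-Y_t/\eps} = P^\eps_t$ $\dbP^\eps$-a.s., which forces $Y = Y^\eps$; matching drift and martingale parts in the BSDE then gives $Z = Z^\eps$.

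Finally, for the BMO bound I rearrange the BSDE into
\[
\tfrac12\int_t^T |Z^\eps_s|^2\,ds = \xi - Y^\eps_t + \int_t^T Z^\eps_s\,dB_s.
\]
Choose a localizing sequence $\tau_n \uparrow T$ (with $\tau_n \ge t$) for the stochastic integral, apply the identity between $t$ and $\tau_n$, and take $\dbE^{\dbP^\eps}_t$; the stopped integral is a true martingale, so
\[
\tfrac12\,\dbE^{\dbP^\eps}_t\!\left[\int_t^{\tau_n}|Z^\eps_s|^2\,ds\right]
= \dbE^{\dbP^\eps}_t[Y^\eps_{\tau_n}] - Y^\eps_t \;\le\; 2\|\xi\|_\infty.
\]
Monotone convergence in $n$ produces $\dbE^{\dbP^\eps}_t\int_t^T|Z^\eps_s|^2\,ds \le 4\|\xi\|_\infty$ uniformly in $t$, which is the desired BMO estimate. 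The main technical subtlety, beyond these essentially algebraic manipulations, is the martingale-versus-local-martingale distinction for $\int Z^\eps\,dB$; the localization detour above is the cleanest way I see to handle it without imposing any a priori integrability condition on $Z^\eps$ beyond what the BSDE already provides.
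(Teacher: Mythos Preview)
Your proof is correct and follows essentially the same approach as the paper: use the exponential transform $p^\eps = e^{-Y^\eps/\eps}$, apply martingale representation and It\^o's formula to derive the quadratic BSDE, then read off the BMO bound from the equation using the boundedness of $Y^\eps$. You are in fact more careful than the paper on two points the authors leave implicit---the uniqueness of the bounded solution (which you obtain by reversing the transform) and the localization of the stochastic integral in the BMO estimate.
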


\proof Since $\xi$ is bounded, we see immediately that $Y^\e_t\le-\e\ln\big(e^{-\frac{1}{\e}\|\xi\|_\infty}\big)=\|\xi\|_\infty$ and, similarly $Y^\e_t \ge-\|\xi\|_\infty$. Consequently, the process 
$$p^\e:=e^{-\frac{1}{\e}Y^\e}=\dbE_t^{\dbP^\e}[e^{-\frac{1}{\e}\xi(B,X)}]$$ 
is a bounded martingale. By martingale representation, there exists a process $q^\e$, with $\dbE^{\dbP^{\eps}}\big[\int_0^T|q^\eps_t|^2dt\big]<\infty$, such that $p^\e_t=p^\e_0+\int_0^tq^\e_s\cdot dB_s,$ for all $t\in[0,T]$.
Then, $Y^\e$ solves the quadratic backward SDE by It\^o's formula. The estimate $\|Z\|_{\dbH^2_{\mbox{\sc\tiny bmo}}(\dbP^\e)}$ follows immediately by taking expectations in the quadratic backward SDE, and using the boundedness of $Y^\e$ by $\|\xi\|_\infty$.
\qed
 
\vspace{5mm}

We next provide a stochastic control representation for the process $Y^\e$. For all $\alpha\in\Hbmo^2$, we introduce
 \beaa
 M^{\e,\a}_T
 &:=&
 e^{\frac{1}{\e}\int_0^T\a_t\cdot dB_t
       -\frac{1}{2\e}\int_0^T|\a_t|^2 dt.
       }
 \eeaa
Then $\dbE^{\dbP^\e}\big[M^{\e,\a}_T\big]=1$, and we may introduce an equivalent probability measure $\dbP^{\e,\a}$ by the density $d\dbP^{\e,\a}:=M^{\e,\a}_T d\dbP^\e$. Define:
 \beaa
 Y^{\e,\a}_t
 &=&
 \dbE^{\dbP^{\e,\a}}\Big[\xi+\frac12\int_t^T|\alpha_s|^2ds\Big],
 ~~\dbP^\e-\mbox{a.s.}
 \eeaa

\begin{lem}\label{lem-stochrep}
We have
 \beaa
 Y^\e_0
 \;=\;
 Y^{\e,Z^\e}_0
 &=&
 \inf_{\a\in\Hbmo^2(\dbP^\e)}Y^{\e,\a}_0.
 \eeaa
\end{lem}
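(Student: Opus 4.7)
My plan is to use the standard change-of-measure and square-completion trick for quadratic BSDEs, adapted to the fact that under $\dbP^\e$ the canonical process $B$ has quadratic variation $\e t$ (equivalently, $W^\e = B/\sqrt\e$ is Brownian).

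First I would check that the density $M^{\e,\a}_T$ does define an equivalent measure: writing it in terms of $W^\e$ gives $M^{\e,\a}_T = \exp\bigl(\frac{1}{\sqrt\e}\int_0^T \a_t\cdot dW^\e_t - \frac{1}{2\e}\int_0^T|\a_t|^2 dt\bigr)$, which is a genuine Dol\'eans--Dade exponential for the integrand $\a/\sqrt\e$ against $W^\e$. The BMO assumption on $\a$ (together with the $\e$-scaling) guarantees it is a true $\dbP^\e$-martingale, so $\dbP^{\e,\a}$ is well defined, and Girsanov yields that $\tilde B_t := B_t - \int_0^t \a_s\,ds$ is, up to the $\sqrt\e$-scaling, a Brownian motion under $\dbP^{\e,\a}$.

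Next I would substitute $dB_s = d\tilde B_s + \a_s\,ds$ into the BSDE of Proposition \ref{bsde} and complete the square via the identity
\[
 -\tfrac12|Z^\e_s|^2 + Z^\e_s\cdot\a_s \;=\; \tfrac12|\a_s|^2 - \tfrac12|\a_s - Z^\e_s|^2,
\]
so that
\[
 Y^\e_t \;=\; \xi + \tfrac12\int_t^T|\a_s|^2 ds - \tfrac12\int_t^T|\a_s-Z^\e_s|^2 ds + \int_t^T Z^\e_s\cdot d\tilde B_s.
\]
The stochastic integral $\int_0^\cdot Z^\e_s\cdot d\tilde B_s$ is a $\dbP^{\e,\a}$-martingale: since $Z^\e \in \Hbmo^2(\dbP^\e)$ by the BMO estimate in Proposition \ref{bsde} and $\a\in\Hbmo^2(\dbP^\e)$ by assumption, the Kazamaki/Girsanov preservation theorem for BMO martingales applies (this is the key technical point I would want to cite carefully). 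Taking $\dbE^{\dbP^{\e,\a}}_0[\cdot]$ then gives
\[
 Y^\e_0 \;=\; Y^{\e,\a}_0 - \tfrac12\,\dbE^{\dbP^{\e,\a}}\!\!\left[\int_0^T |\a_s - Z^\e_s|^2\,ds\right],
\]
which immediately yields $Y^\e_0 \le Y^{\e,\a}_0$ for every admissible $\a$, and equality when $\a = Z^\e$ (noting $Z^\e\in\Hbmo^2(\dbP^\e)$ so this choice is itself admissible).

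The main obstacle is the martingale step: one must verify that $\int Z^\e\cdot d\tilde B$ is a true martingale (not merely a local one) under $\dbP^{\e,\a}$, which is where the BMO bound from Proposition \ref{bsde} is essential. All other steps are direct algebraic manipulations and a standard application of Girsanov's theorem in the $\e$-scaled setting.
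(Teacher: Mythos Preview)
Your proof is correct but takes a different route from the paper. The paper first observes that $Y^{\e,\a}$ solves the \emph{linear} BSDE
\[
 dY^{\e,\a}_t = -Z^{\e,\a}_t\cdot dB_t - \big(Z^{\e,\a}_t\cdot\a_t - \tfrac12|\a_t|^2\big)\,dt
\]
under $\dbP^\e$, and then invokes the comparison theorem for BSDEs: since the generator $-\tfrac12|z|^2$ of the equation for $Y^\e$ satisfies $-\tfrac12|z|^2 = \inf_{a}\{-a\cdot z + \tfrac12|a|^2\}$, comparison yields $Y^\e \le Y^{\e,\a}$, with equality when $\a = Z^\e$. You instead stay with the $Y^\e$-equation, change measure via Girsanov, and complete the square explicitly; the comparison principle is replaced by the trivial nonnegativity of $\tfrac12\,\dbE^{\dbP^{\e,\a}}\!\int_0^T|\a_s - Z^\e_s|^2\,ds$. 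Both arguments rest on the same algebraic identity, but yours produces an explicit formula for the gap $Y^{\e,\a}_0 - Y^\e_0$ and trades the quadratic-BSDE comparison machinery (Kobylanski) for the BMO-preservation-under-Girsanov result you flag. The paper's version is shorter if one accepts comparison as a black box; yours is more self-contained and makes the role of the BMO estimate from Proposition~\ref{bsde} more transparent.
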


\proof Notice that $Y^{\e,\a}$ solves the linear backward SDE
 \beaa
 dY^{\e,\a}_t
 &=&
 -Z^{\e,\a}_t\cdot dB_t- \big(Z^{\e,\a}_t\cdot \a_t-\frac{1}{2}|\a_t|^2\big)dt,
 ~~\dbP^\e-\mbox{a.s.}
 \eeaa
Since $-\frac12 z^2=\inf_{a\in\dbR^d}\big\{-a\cdot z+\frac12 a^2\big\}$, it follows from the comparison of BSDEs that $Y^{\e,\a}\ge Y^\e$. The required result follows from the observation that the last supremum is attained by $a^*=z$, and that $Y^{\e,Z^\e}=Y^\e$.
\qed

\vspace{5mm}

\no {\bf Proof of Theorem \ref{thm:laplace}.}\quad First, it is clear that $ \dbL^2_d \subset \cap_{\e>0} \Hbmo^2(\dbP^\e)$. Let $\alpha\in \dbL^2_d$ and any $\e>0$ be fixed. Since $\alpha$ is deterministic, it follows from the Girsanov Theorem that 
\beaa
Y^{\e,\a}_0 = \dbE^{\dbP_0}\Big[\xi(W^{\e,\a}, X^{\e,\a}) + {1\over 2} \int_0^T |\a_t|^2dt\Big],
\eeaa
where
\beaa
\left.\ba{lll}
W^{\e,\a}_t &:=&  \sqrt{\e} B_t + \int_0^t \a_s ds,\\
 X^{\e,\a}_t &=& X_0 + \int_0^t b_s(W^{\e,\a}, X^{\e,\a}_s) ds +  \int_0^t \si_s(W^{\e,\a}, X^{\e,\a}_s) d W^{\e,\a}_s,
 \ea\right.
 \dbP_0\mbox{-a.s.}
\eeaa
By the given regularities, it is clear that $\lim_{\e\to 0} Y^{\e,\a}_0 = l^\a_0$.  Then it follows from Lemma \ref{lem-stochrep} that
 \beaa
 \limsup_{\e\to 0}
 Y^\e_0
\;\le\;
 \limsup_{\e\to 0}
 Y^{\e,\a}_0
 &=&
 \ell_0^\a.
 \eeaa
By the arbitrariness of $\alpha\in\dbL^2_d$, this shows that $\limsup_{\e\to 0}Y^\e_0\le L_0$.

To prove the reverse inequality, we use the minimizer from Lemma \ref{lem-stochrep}. Note that $\dbP^\e$ is equivalent to $\dbP^{\e, Z^\e}$ and for $\dbP^\e$-a.e. $\o$, $\a^{\e,\o} := Z^\e_\cd (\o) \in \dbL^2_d$.  Then we compute that
 \beaa
 Y^\e_0
 \;=\;
 Y^{\e,Z^\e}_0
 &=&
 \dbE^{\dbP^{\e,Z^\e}}\Big[\xi(B,X)+\frac12\int_0^T\big|Z^\e_t\big|^2dt\Big]
 \\
 &\ge&
 L_0 +\dbE^{\dbP^{\e,Z^\e}}\Big[\xi(B,X)-\xi\big(\o^{Z^\e(\o)},x^{Z^\e(\o)}(\o)\big)\Big]
 \\
 &\ge&
 L_0 -\dbE^{\dbP^{\e,Z^\e}}\Big[\rho\big(\big\|B-\o^{Z^\e(\o)}\big\|_T
                                                                +\big\|X-x^{Z^\e(\o)}(\o)\big\|_T
                                                         \big)
                                          \Big].
 \eeaa
By definition of $\o^\a$, notice that $\o\longmapsto W^\e(\o):=\eps^{-1/2}\big(B(\o)-\o^{Z^\e(\o)}\big)$ defines a Brownian motion under $\dbP^{\e,Z^\e}$. Then it is clear that
\beaa
\limsup_{\e\to 0} \dbE^{\dbP^{\e,Z^\e}}\Big[\big\|B-\o^{Z^\e(\o)}\big\|_T\Big] =\limsup_{\e\to 0} \dbE^{\dbP^{\e,Z^\e}}\Big[\sqrt{\e}\|W^\e\|_T\Big]  = 0.
\eeaa

Furthermore, recall that $\si$ and $b$ are Lipschitz-continuous, it follows from the comparison of SDEs that $\underline\delta_t\le X-x^{Z^\e} \le \overline\delta_t$, where $\underline\delta_0=\overline\delta_0=0$, and 
 \beaa
 d\underline\delta_t
 &=&
 \sigma_t(B,X)\sqrt{\e}dW^\e_t
 -L\big(\sqrt{\e}\|W^\e\|_t+\|\underline\delta\|_t\big)\left(|Z^\e_t|+1\right)dt,
 \\
 d\overline\delta_t
 &=&
 \sigma_t(B,X)\sqrt{\e}dW^\e_t
 +L\big(\sqrt{\e}\|W^\e\|_t+\|\overline\delta\|_t\big)\left(|Z^\e_t|+1)\right)dt.
 \eeaa
We now estimate $\overline{\delta}$. The estimation of $\underline\delta$ follows the same line of argument. Denote $K_t:=\int_0^t\sigma_s(B,X)dW^\e_s$. By Gronwall's inequality, we obtain
  \beaa
  \eps^{-1/2}\|\overline{\d}_T\|
  &=&
  L\|W^\e\|_T\int_0^T e^{L\int_t^T\left(|Z^\e_s|+1\right)ds}\left(|Z^\e_t|+1\right) dt +\int_0^T e^{L\int_t^T\left(|Z^\e_s|+1\right)ds}d\|K\|_t
  \\
  &\le &
  e^{L\int_0^T\left(|Z^\e_s|+1\right)ds} 
  \left(\|W^\e\|_T+ \|K\|_T\right).
  \eeaa
Then,
 \beaa
 \eps^{-1\slash 2} e^{-LT} \dbE^{\dbP^{\e, Z^\e}}[\|\overline{\d}_T\|]
 &\le&
\dbE^{\dbP^{\e, Z^\e}}\Big[e^{L\int_0^T|Z^\e_s|ds}[\|W^\e\|_T+\|K\|_T\Big]\\
&\le& \Big( \dbE^{\dbP^{\e, Z^\e}}\Big[e^{2L\int_0^T|Z^\e_s|ds}\Big]\Big)^{1\over 2} \Big(\dbE^{\dbP^{\e, Z^\e}}\Big[\|W^\e\|_T^2+\|K\|_T^2\Big]\Big)^{1\over 2}.
 \eeaa
 Recall that $\si_t(0,x)$ is bounded. One may easily check that, for some constant  $C$ independent of $\e$,  
 \beaa
\dbE^{\dbP^{\e, Z^\e}}\Big[\|W^\e\|_T^2+\|K\|_T^2\Big] \le C.
 \eeaa
 Moreover, note that
 \beaa
 Y^\e_t = \xi + {1\over 2} \int_t^T |Z^\e_s|^2 ds - \sqrt{\e}\int_t^T  Z^\e_t d W^\e_t.
 \eeaa
Then, it follows that $ \|Z\|_{\Hbmo^2(\dbP^{\e, Z^\e})} \le 4 \|\xi\|_\infty$, and $\dbE^{\dbP^{\e,Z^\e}}\big[e^{\eta\int_0^T|Z^\e_s|^2ds} \big]  \le C$ for all $\e>0$,  for some $\eta>0$ and $C>0$ independent of $\e$, see e.g. \cite{CvitanicZhang}.  This implies $\dbE^{\dbP^{\e, Z^\e}}\Big[e^{2L\int_0^T|Z^\e_s|ds}\Big] \le C$ and thus
 \beaa
\dbE^{\dbP^{\e, Z^\e}}[\|\overline\d\|_T]\le C\sqrt{\e},\q\forall \e.
 \eeaa
 Similarly, $\dbE^{\dbP^{\e,Z^\e}}[\|\underline\delta\|_T] \le C\sqrt{\eps}$, and we may conclude that
 \beaa
 \dbE^{\dbP^{\e,Z^\e}}\Big[\rho\big(\big\|B-\o^{Z^\e}\big\|_T
                                                                +\big\|X-x^{Z^\e}\big\|_T
                                                         \big)
                                          \Big]
&\longrightarrow&
0,~~\mbox{as}~~\eps\searrow 0,
\eeaa
completing the proof.
\qed

\section{Asymptotics of the exiting probability}
\label{sect:Prob2}

This section is dedicated to the proof of Theorem \ref{thm:exit}. As before, we introduce the processes:
 \beaa
 Y^\e_t
 :=
 -\e\ln p^\e_t,
 ~~
 p^\e_t
 :=
 \dbP^\e_t[H<T]
 &\mbox{for all}&
 t\le T.
 \eeaa
Unlike the previous problem, the present example features an additional difficulty due to the singularity of the terminal condition:
 \beaa
 \lim_{t\rightarrow T}Y^\e_t
 =
 \infty
 &\mbox{on}&
 \{H\geq T\}.
 \eeaa
We shall first show that $\limsup_{\e\downarrow0}Y_0^\e\leq Q_0$. Adapting the argument of Fleming \& Soner \cite{FlemingSoner}, this will follow from the following estimate.
  
\begin{lem}\label{lem:upbound}
There exists a constant $K$ such that for any $\e>0$ we have
\beaa
Y^\e_t
\;\le\; \frac{Kd(X_t,\pa O)}{T-t}
&\text{for all}&
t<T~\text{ and }~t\leq H,~~\dbP^\e\text{-a.e.}
\eeaa
\end{lem}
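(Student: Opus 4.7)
The plan is to construct an explicit submartingale bound in the spirit of the Fleming--Soner supersolution approach. Since $p^\e_t := \dbP^\e_t[H<T]$ is a bounded $\dbP^\e$-martingale, it suffices to produce a nonnegative submartingale $M^\e$ on $[0,H\wedge T)$ with $\limsup_{s\ua H\wedge T}M^\e_s \le \1_{\{H\le T\}}$ and $M^\e_t=\exp(-K\psi(X_t)/(\e(T-t)))$ for a well-chosen $\psi$; optional stopping together with Fatou's lemma then yields $M^\e_t \le p^\e_t$, and taking $-\e\ln$ gives the stated inequality.

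First, using the $C^3$ regularity of $\partial O$, I would construct $\psi\in C^2(\bar O)$ such that $\psi=0$ on $\partial O$, $\psi>0$ on $O$, $\psi$ agrees with the signed distance $d(\cdot,\partial O)$ in a tubular neighborhood of $\partial O$ (where the distance is $C^3$ with $|\nabla d|=1$), and $\psi \le c_2\,d(\cdot,\partial O)$ on $\bar O$. If a direct smooth extension fails to keep $|\nabla\psi|$ bounded below away from $\partial O$, I would compose with an increasing smooth transformation (for example $\lambda^{-1}(1-e^{-\lambda\psi})$ with $\lambda$ large) so that $|\nabla\psi|\ge c_1>0$ on $\bar O$. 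Combined with the uniform ellipticity of Assumption \ref{assum:hitting}, this yields $|\sigma_t^{\rm T}\nabla\psi(X_t)|^2 \ge c_0 > 0$ uniformly in $(t,B,X)$.

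Next, setting $M^\e_t := \exp(-K\psi(X_t)/(\e(T-t)))$ and applying It\^o's formula with $d\lan B\ran_t = \e\,dt$ under $\dbP^\e$, the drift of $M^\e$ takes the form
$$
\frac{M^\e_t}{\e(T-t)^2}\Big[\tfrac{K^2}{2}|\sigma_t^{\rm T}\nabla\psi(X_t)|^2 -K\psi(X_t) - K(T-t)\,b_t\cdot\nabla\psi(X_t) - \tfrac{\e K(T-t)}{2}\tr\bigl(a_t\nabla^2\psi(X_t)\bigr)\Big].
$$
Under Assumptions \ref{assum:laplace} and \ref{assum:hitting}, the quantities $\psi$, $\nabla\psi$, $\nabla^2\psi$, $b_t$, $a_t$ are uniformly bounded, so the bracketed expression dominates $\tfrac{K^2 c_0}{2} - C(1+KT)$ for some $C$ independent of $\e\in(0,1]$ and of the path $(B,X)$. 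Choosing $K$ large enough (independent of $\e$) makes the drift nonnegative, hence $M^\e$ is a bounded nonnegative submartingale on $[0,H\wedge T)$.

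Finally, I would analyze the terminal value: on $\{H\le T\}$ path-continuity gives $X_H\in\partial O$, so $\psi(X_H)=0$ and $M^\e_H=1$; on $\{H>T\}$ the continuous path $X_t$ stays in a compact subset of $O$ on which $\psi\ge c>0$, so $M^\e_t\to 0$ as $t\ua T$. Thus $\limsup_{s\ua H\wedge T} M^\e_s \le \1_{\{H\le T\}}$, and optional stopping along a sequence $T_n\ua T$ together with Fatou's lemma yields $M^\e_t \le \dbE^{\dbP^\e}_t[\1_{\{H<T\}}]=p^\e_t$, whence $Y^\e_t \le K\psi(X_t)/(T-t) \le Kc_2\,d(X_t,\partial O)/(T-t)$. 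The main obstacle is the construction of $\psi$: securing the drift inequality uniformly on $\bar O$ requires $|\nabla\psi|$ to be bounded below everywhere in $\bar O$, which is why the $C^3$ boundary hypothesis is used together with the exponential modification; the rest of the argument is then a by-now standard submartingale comparison.
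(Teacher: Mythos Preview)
Your submartingale comparison strategy is exactly the right one, and the It\^o computation and terminal analysis are correct. The gap is in the construction of $\psi$. You ask for $\psi\in C^2(\bar O)$ with $\psi=0$ on $\partial O$, $\psi>0$ on $O$, and $|\nabla\psi|\ge c_1>0$ on all of $\bar O$. Such a function cannot exist: since $O$ is bounded, $\psi$ attains an interior maximum at some $x_0\in O$, and there $\nabla\psi(x_0)=0$. Composing with an increasing function $f$ does not help, because $\nabla(f\circ\psi)=f'(\psi)\nabla\psi$ still vanishes at $x_0$; in particular the transformation $\lambda^{-1}(1-e^{-\lambda\psi})$ you propose gives $\nabla\tilde\psi=e^{-\lambda\psi}\nabla\psi=0$ at $x_0$. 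At such a critical point your drift reduces to $-K\psi(x_0)-\tfrac{\e K(T-t)}{2}\tr(a_t\nabla^2\psi(x_0))$, and although the Hessian term is nonnegative there, it is bounded while $-K\psi(x_0)<0$ is linear in $K$ and cannot be compensated uniformly in $\e\in(0,1]$ and $t<T$. So the submartingale property fails in the interior of $O$, and the argument breaks down.

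The paper resolves this by a two-stage construction rather than a single global $\psi$. First, fixing $T_1<T$, it uses an affine function $g^\e(t,x)=\exp\big(-\lambda(x^1+\mu)/(\e(T_1-t))\big)$ with $\mu$ chosen so that $x^1+\mu>0$ on $\bar O$; this function does \emph{not} vanish on $\partial O$, but its gradient is a nonzero constant, so the submartingale property holds on all of $[0,T_1\wedge H]$ for large $\lambda$, yielding $g^\e\le p^\e$. Second, it sets $\tilde g^\e(t,x)=\exp\big(-Kd(x)/(\e(T_1-t))\big)$ and splits into two regions: in the tubular neighborhood $\{d<\eta\}$ where $d$ is $C^2$ and $|\nabla d|=1$, your It\^o argument gives the submartingale property up to the first exit from the tube; in the complement $\{d\ge\eta\}$, one simply chooses $K\ge\lambda(C+\mu)/\eta$ so that $\tilde g^\e\le g^\e\le p^\e$ pointwise. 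Patching the two regions and sending $T_1\to T$ gives the bound. The key idea you are missing is this auxiliary affine barrier, which handles the interior where the distance function is neither smooth nor has gradient bounded below.
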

\begin{proof}
First, fix $T_1<T$. For $x\in\dbR^d$, we denote by $x^1$ its first component. Since $O$ is bounded, there exists constant $m$ such that $x^1+\m>0$ for all $x\in O$. Define a function:
 \beaa
 g^\e(t,x)
 \;:=\;
 \exp\left(-\frac{\l(x^1+\m)}{\e(T_1-t)}\right),
 &\mbox{for}&
 t<T_1,~x\in \mbox{cl}(O),
 \eeaa
where $\l$ is some constant to be chosen later. By It\^o's formula, we have $\dbP^\e\text{-a.s.}$,
\beaa
dg^\e(t,X_t)=\frac{g^\e(t,X_t)}{\e (T_1-t)^2} \left[\frac12a^{1,1}_t(B,X)\l^2-\l(X^1_t+\m)-(T_1-t)\l b^1_t(B,X)\right]dt+dM_t,
\eeaa
for some $\dbP^\e-$martingale $M$. Since $a^{1,1}$ is uniformly bounded away from zero and $b^1$ is uniformly bounded, the $dt$-term of the above expression is positive for a sufficiently large $\l=\l^*$. Hence, $g^\e(t,X_t)$ is a submartingale on $[0,T_1\wedge H]$. Also, note that $g^\e(T_1,X_{T_1})=0\leq p_{T_1}^\e$ and $g^\e(H,X_H)\leq 1= p_H^\e$. Since $p^\e$ is a martingale, we conclude that
 \beaa
 g^\e(t,X_t)
 \;\le\; 
 p^\e_t
 &\mbox{for all}&
 t\leq T_1\wedge H,\ \dbP^\e\text{-a.s.}
 \eeaa
Denote $d(x):=d(x,\pa O)$. Since $\pa O$ is $C^3$, there exists a constant $\eta$ such that on $\{x\in O:d(x)<\eta\}$, the function $d$ is $C^2$.
Now, define
 \beaa
 \tilde{g}^\e(t,x)
 \;:=\;
 \exp\left(-\frac{Kd(x)}{\e(T_1-t)}\right),
 &\mbox{for}&
 t<T_1,~x\in \mbox{cl}(O),
 \eeaa 
for some $K\geq \frac{\l^*(C+\m)}{\eta}$. Clearly, for $t\leq T_1\wedge H$ and $d(X_t)\geq \eta$, we have
 $$
 \tilde{g}^\e(t,X_t)
 \;\le\; 
 g^\e(t,X_t)
 \;\le\; 
 p^\e_t,~~\dbP^\e-\text{a.s.}
 $$
In the remaining case $t\leq T_1\wedge H$ and $d(X_t)< \eta$, we will now verify that 
 \beaa
 \big\{\tilde{g}^\e(s,X_s)1_{\{d(X_t)<\eta\}},s\in[t,H_{\eta}\wedge H\wedge T]\big\}
 &\mbox{is a}&
 \dbP^\e-\mbox{submartingale,}  
 \eeaa 
where $H_{\eta}:=\inf\{s:d(X_s)\geq \eta\}$. By It\^o's formula, together with the fact that $|Dd(x)|=1$, 
 \beaa
 d\tilde{g}^\e(s,X_s) 
 &=& 
 \frac{K\tilde{g}^\e(s,X_s)}{\e(T_1-s)^2}
 \Big[\frac{K}{2}a_sDd(X_s)\cdot Dd(X_s)-\e\frac{T_1-s}{2}\text{tr}\big(a_sD^2d(X_s)\big)\\
 & & \hspace{25mm} 
 -(T_1-s)b_s\cdot Dd(X_s)-d(X_s)\Big]ds + dM_s
 \\ 
 &\ge& 
 \frac{K\tilde{g}^\e(s,X_s)}{\e(T_1-s)^2}\Big(\frac{K}{2}\d-\e\frac{T_1-s}{2}|a_s| \big|D^2d(X_s)\big|
                                                                     -(T_1-s)\|b_s\|\Big)ds
 +dM_s.
 \eeaa
Hence, for sufficiently large $K=K^*$, the $dt$-term is positive, and  $\tilde{g}^\e(s,X_s)1_{\{d(X_t)<\eta\}}$ is a submartingale for $s\in[t,H_{\eta}\wedge H\wedge T]$. We also verify directly that 
 $$
 \tilde{g}^\e(H_{\eta}\wedge H\wedge T,X_{H_{\eta}\wedge H\wedge T})1_{\{d(X_t)<\eta\}}
 \;\le\;
 p^\e_{H_{\eta}\wedge H\wedge T},
 ~~\dbP^\e-\text{a.s.}
 $$
Since $p^\e$ is a $\dbP^\eps-$martingale, we deduce that $\tilde{g}^\e(t,X_t)\leq p^\e_t$ for $t\leq T_1\wedge H$ and $d(X_t)<\eta$. Thus, we may conclude that 
$$\tilde{g}^\e(t,X_t)\leq p^\e_t\text{ for all }t\leq T_1\wedge H,\ \dbP^\e\text{-a.s.}$$
Let $T_1\rightarrow T$, we finally get
\beaa
Y^\e_t
\;\le\; 
\frac{Kd(X_t)}{T-t}
&\text{for all}&
t<T~~\text{and}~~t\leq H,~~\dbP^\e\text{-a.s.}
\eeaa
\end{proof}

\begin{prop}\label{prop:exit le}
$\limsup_{\e\downarrow 0}Y_0^\e\leq Q_0$.
\end{prop}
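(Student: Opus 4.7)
The plan is to prove that $\limsup_{\e\to 0}Y^\e_0\le q_0^\a$ for each $\a\in\dbL^2_d$ whose associated path $x^\a$ strictly exits $O$ before $T$, and then conclude by a density argument and by taking the infimum over all admissible $\a$. Two ingredients will be combined: the quadratic BSDE satisfied by $Y^\e$ together with a Girsanov tilt by $\a$, and the pointwise bound of Lemma \ref{lem:upbound}.

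First, since $p^\e_t:=\dbP^\e_t[H<T]$ is a bounded $\dbP^\e$-martingale, the martingale representation theorem gives $dp^\e=q^\e\cdot dB$, and It\^o's formula applied to $Y^\e=-\e\ln p^\e$ (using $d\langle B\rangle_t=\e\,dt$ under $\dbP^\e$) produces, on $[0,T_1\wedge H]$ for any $T_1\in(0,T)$,
\beaa
dY^\e_t = \tfrac12|Z^\e_t|^2\,dt + Z^\e_t\cdot dB_t, \qquad Z^\e_t:=-\e\,q^\e_t/p^\e_t.
\eeaa
Fix $\a\in\dbL^2_d$ with $x^\a_{s_0}\notin\overline O$ for some $s_0<T$, and pick $T_1\in(s_0,T)$. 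Lemma \ref{lem:upbound} bounds $Y^\e$ uniformly on $[0,T_1\wedge H]$ by $K\diam(O)/(T-T_1)$, which yields a BMO estimate for $Z^\e$ in the spirit of Proposition \ref{bsde}. Introduce the equivalent measure $\dbP^{\e,\a}$ of Section \ref{sect:Prob1}, under which $dB_t=\a_t\,dt+\sqrt\e\,d\tilde W_t$ with $\tilde W$ a $\dbP^{\e,\a}$-Brownian motion. Substituting and completing the square gives
\beaa
dY^\e_t = \tfrac12|Z^\e_t+\a_t|^2\,dt - \tfrac12|\a_t|^2\,dt + \sqrt\e\,Z^\e_t\cdot d\tilde W_t.
\eeaa
Integrating over $[0,T_1\wedge H]$ and taking $\dbP^{\e,\a}$-expectation (the Brownian integral is a true martingale by the BMO property and equivalence of measures) produces the key inequality
\beaa
Y^\e_0 \le \dbE^{\dbP^{\e,\a}}\big[Y^\e_{T_1\wedge H}\big] + \tfrac12\int_0^T|\a_s|^2\,ds.
\eeaa

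Second, to control the terminal expectation, note that on $\{H<T_1\}$ we have $p^\e_H=1$, hence $Y^\e_H=0$; while on $\{H\ge T_1\}$, Lemma \ref{lem:upbound} yields $Y^\e_{T_1}\le K\diam(O)/(T-T_1)$. Therefore
\beaa
\dbE^{\dbP^{\e,\a}}\big[Y^\e_{T_1\wedge H}\big]\le \frac{K\diam(O)}{T-T_1}\;\dbP^{\e,\a}\big[H\ge T_1\big].
\eeaa
Under $\dbP^{\e,\a}$ one has $B_t=\o^\a_t+\sqrt\e\,\tilde W_t$, so by Lipschitz continuity of $b$ and $\sigma$ together with standard SDE stability, $X$ converges uniformly in probability to $x^\a$ as $\e\to 0$; since $x^\a$ exits $\overline O$ strictly before $T_1$, we obtain $\dbP^{\e,\a}[H\ge T_1]\to 0$. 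Combining these steps yields $\limsup_{\e\to 0}Y^\e_0\le q_0^\a$.

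Finally, it remains to let $\a$ range over a dense subset of admissible controls and pass to the infimum: given any $\a$ for which $x^\a$ merely touches $\pa O$ without leaving, one perturbs $\a$ into a strictly exiting $\a^\d$ satisfying $q_0^{\a^\d}\le q_0^\a+\d$, using the $C^3$ regularity of $\pa O$ and the uniform ellipticity of $\sigma$ from Assumption \ref{assum:hitting} to graft a short outward push onto $\a$ after the first contact at arbitrarily small cost. Taking the infimum over such $\a^\d$ then yields $\limsup Y^\e_0\le Q_0$. I anticipate this density/perturbation step to be the main technical obstacle, since it must be executed carefully in the present path-dependent framework where the geometry of $x^\a$ depends nonlinearly on $\a$.
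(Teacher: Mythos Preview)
Your proposal is correct and the main inequality you derive is sound. The key computation---completing the square under the Girsanov tilt by a deterministic $\a$, dropping the nonnegative term $\tfrac12|Z^\e+\a|^2$, and then bounding $\dbE^{\dbP^{\e,\a}}[Y^\e_{T_1\wedge H}]$ via Lemma~\ref{lem:upbound} and SDE stability---goes through as you describe. The martingale property of $\int Z^\e\,d\tilde W$ is indeed justified: boundedness of $Y^\e$ on $[0,T_1]$ (from Lemma~\ref{lem:upbound} together with $Y^\e\ge 0$) gives a BMO bound on $Z^\e$ under $\dbP^\e$, and since your $\a$ is deterministic the Girsanov density is lognormal with all moments, so $Z^\e\in L^2(\dbP^{\e,\a})$ by H\"older and the BMO energy inequality.

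Your route differs from the paper's in an interesting way. The paper does not fix $\a$ and change measure; instead it introduces an auxiliary quadratic BSDE $\overline Y^{\e,T_1}$ with the bounded terminal datum $\tfrac{K d(X_{T_1},O^c)}{T-T_1}$, uses BSDE comparison to get $Y^\e_0\le\overline Y^{\e,T_1}_0$, and then applies Theorem~\ref{thm:laplace} directly to $\overline Y^{\e,T_1}_0$, obtaining
\[
\limsup_{\e\to 0}Y^\e_0
\;\le\;
\inf_{\a\in\dbL^2}\Big\{\tfrac12\!\int_0^{T_1}\!|\a_s|^2ds+\tfrac{K d(x^\a_{T_1},O^c)}{T-T_1}\Big\}
\;\le\;
\inf_{\a\in\dbL^2,\,x^\a_{T_1}\notin O}\tfrac12\!\int_0^{T}\!|\a_s|^2ds,
\]
followed by $T_1\to T$. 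Two consequences of this difference: (i) the paper's argument is modular---it reuses Theorem~\ref{thm:laplace} as a black box, so all the SDE-stability and BMO work is already done---whereas you effectively reprove the deterministic-$\a$ case of that theorem by hand; (ii) because the penalty term $d(x^\a_{T_1},O^c)$ vanishes as soon as $x^\a_{T_1}$ touches $\partial O$, the paper never needs a strictly-exiting control, so your density/perturbation step (which you rightly flag as the main obstacle) disappears entirely. The residual approximation in the paper is just the monotone limit $T_1\uparrow T$ of an infimum over $\a$, which is lighter than constructing an outward push at $\partial O$ in the path-dependent setting. If you wish to stay with your Girsanov approach but avoid the perturbation step, you could replace the indicator of strict exit by the continuous penalty $\tfrac{K d(X_{T_1},\partial O)}{T-T_1}$ in your terminal bound; this is exactly the bridge the paper builds.
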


\begin{proof}
As in Proposition \ref{bsde}, we may show that there exists a process $Z^\e$ such that for any $T_1<T$:
 $$
 Y^\e_t
 =
 Y^\e_{T_1}-\frac12\int_t^{T_1}|Z^\e_s|^2ds+\int_t^{T_1}Z^\e_s\cdot dB_s,
 ~~\dbP^\e-\text{a.s.}
 $$
Define a sequence of BSDEs:
 $$
 \overline{Y}^{\e,T_1}_t
 =
 \frac{Kd(X_{T_1},O^c)}
        {T-T_1}
 -\frac12\int_t^{T_1}|Z_s^{\e,T_1}|^2ds
 +\int_t^{T_1} Z_t^{\e,T_1}\cdot dB_s,
 ~~\dbP^\e-\text{a.s.}
 $$
Note that $Y^\e_{T_1\wedge H}\leq \frac{Kd(X_{T_1\wedge H},O^c)}{T-T_1\wedge H}\leq \frac{Kd(X_{T_1},O^c)}{T-T_1}$. By Lemma \ref{lem:upbound} and the comparison principle of BSDE, we deduce that
 \beaa
 Y^\e_0
 &\le& 
 \overline{Y}^{\e,T_1}_0
 ~~\text{for all}~~T_1<T.
 \eeaa
Since $\xi(x):=\frac{Kd(x_{T_1},O^c)}{T-T_1}$ is bounded and uniformly continuous, it follows from Theorem \ref{thm:laplace} that
 \beaa
 \lim_{\e\rightarrow 0}\overline{Y}^{\e,T_1}_0
 =
 y^{T_1}_0
 :=
 \inf_{\a\in\dbL^2}\Big\{\frac12\int_0^{T_1}\a_t^2dt+\frac{Kd(x^\a_{T_1},O^c)}{T-T_1}\Big\}.
\eeaa
Thus, we have
\beaa
\limsup_{\e\downarrow 0}Y_0^\e 
&\leq &
 \inf_{\a\in\dbL^2}\Big\{\frac12\int_0^{T_1}\a_t^2dt+\frac{Kd(x^\a_{T_1},O^c)}{T-T_1}\Big\}
 \;\leq\;
 \inf_{\a\in\dbL^2,x^\a_{T_1}\notin O}
 \Big\{\frac12\int_0^T\a_t^2dt\Big\}.
 \eeaa
Finally, observe that 
 $$
 \inf_{\a\in\dbL^2,x^\a_{T_1}\notin O}
 \Big\{\frac12\int_0^T\a_t^2dt\Big\}
 =
 \inf_{\a\in\dbL^2,x^\a_{T_1\wedge\cdot}\notin \cO}
 \Big\{\frac12\int_0^T\a_t^2dt\Big\}
 \longrightarrow Q_0,
 ~~\text{as}~~T_1\rightarrow T.
 $$
\end{proof}

To complete the proof of Theorem \ref{thm:exit}, we next complement the result of Proposition \ref{prop:exit le} by the opposite inequality.

\begin{prop}\label{prop:exit ge}
$\liminf_{\e\downarrow 0}Y_0^\e\geq Q_0.$
\end{prop}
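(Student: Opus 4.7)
The plan is to mirror Proposition~\ref{prop:exit le} by running the BSDE--stochastic-control argument on $[0,T_1]$ for $T_1<T$, but now to bound $Y^\eps_0$ \emph{from below}, exploiting the fact that the terminal singularity is harmless on this side: since $p^\eps_H=1$ on $\{H\le T\}$, one has $Y^\eps_H=0$ there.

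Fix $T_1\in(0,T)$ and set $\tau:=T_1\wedge H$. On $[0,T_1]$, $Y^\eps$ admits the BSDE representation of Proposition~\ref{prop:exit le} with $\Hbmo$ integrand $Z^\eps$. The analog of Lemma~\ref{lem-stochrep} applied at the stopping time $\tau$, combined with $Y^\eps_\tau\ge 0$ (and $Y^\eps_\tau=0$ on $\{H\le T_1\}$), yields
\beaa
Y^\eps_0
\;=\; \dbE^{\dbP^{\eps,Z^\eps}}\Big[Y^\eps_\tau+\tfrac12\int_0^\tau|Z^\eps_s|^2\,ds\Big]
\;\ge\; \tfrac12\,\dbE^{\dbP^{\eps,Z^\eps}}\Big[\int_0^{T_1\wedge H}|Z^\eps_s|^2\,ds\Big].
\eeaa
Under $\dbP^{\eps,Z^\eps}$, Girsanov gives that $W^\eps:=\eps^{-1/2}\big(B-\int_0^\cdot Z^\eps_s\,ds\big)$ is a Brownian motion, so $dX=(b+\sigma Z^\eps)dt+\sqrt{\eps}\,\sigma\,dW^\eps$. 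Because $\|Y^\eps_{T_1}\|_\infty\le K\diam(O)/(T-T_1)$ by Lemma~\ref{lem:upbound}, Proposition~\ref{bsde} provides a uniform-in-$\eps$ BMO bound on $Z^\eps$ under $\dbP^{\eps,Z^\eps}$, and the Gronwall/moment estimate from the proof of Theorem~\ref{thm:laplace} adapts to give
\beaa
\dbE^{\dbP^{\eps,Z^\eps}}\big[\|(B,X)-(\omega^{Z^\eps(\omega)},x^{Z^\eps(\omega)})\|_{T_1}\big]
\;\longrightarrow\; 0,\qquad \eps\searrow 0.
\eeaa

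For $\delta>0$, let $\cO^\delta$ denote the tube of paths staying in the $\delta$-interior of $O$, and $Q_0^\delta$ the analog of $Q_0$ for $\cO^\delta$; the $C^3$-regularity of $\partial O$ yields $Q_0^\delta\to Q_0$ as $\delta\searrow 0$. On the event $E_\eps^\delta:=\{H\le T_1\}\cap\{\|X-x^{Z^\eps(\omega)}\|_{T_1}\le\delta\}$, the deterministic trajectory $x^{Z^\eps(\omega)}$ leaves the $\delta$-interior of $O$ at some time $\le H\le T_1<T$; therefore the control $\alpha(\omega):=Z^\eps(\omega)\mathbf{1}_{[0,H]}$ (extended by $0$) is admissible in the definition of $Q_0^\delta$ and has cost $\tfrac12\int_0^H|Z^\eps(\omega)|^2\,ds=\tfrac12\int_0^{T_1\wedge H}|Z^\eps(\omega)|^2\,ds \ge Q_0^\delta$. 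Combining with the previous display,
\beaa
Y^\eps_0 \;\ge\; Q_0^\delta\cdot\dbP^{\eps,Z^\eps}\big[E_\eps^\delta\big].
\eeaa

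The principal remaining obstacle is to verify $\dbP^{\eps,Z^\eps}[E_\eps^\delta]\to 1$ as $\eps\searrow 0$. The closeness event $\{\|X-x^{Z^\eps(\omega)}\|_{T_1}\le\delta\}$ has $\dbP^{\eps,Z^\eps}$-probability tending to $1$ by Markov's inequality applied to the Gronwall bound. The exit event $\{H\le T_1\}$ is more subtle: were its $\dbP^{\eps,Z^\eps}$-probability to stay bounded away from $1$, reinstating the discarded term $\dbE^{\dbP^{\eps,Z^\eps}}[Y^\eps_{T_1}\mathbf{1}_{\{H>T_1\}}]$ and iterating the same pathwise analysis on the residual interval $[T_1,T]$ (where $X_{T_1}\in O$) would — together with the cost already accrued on $[0,T_1]$ — produce a strictly positive excess contradicting the matching upper bound $\limsup_\eps Y^\eps_0\le Q_0$ of Proposition~\ref{prop:exit le}. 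Letting $\eps\searrow 0$, then $\delta\searrow 0$ and $T_1\nearrow T$, yields $\liminf_\eps Y^\eps_0\ge Q_0$.
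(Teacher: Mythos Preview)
Your approach differs substantially from the paper's, and the gap lies exactly where you flag it as ``subtle'': the claim that $\dbP^{\eps,Z^\eps}[H\le T_1]\to 1$. The contradiction argument you sketch is circular. You propose to ``reinstate'' $\dbE^{\dbP^{\eps,Z^\eps}}[Y^\eps_{T_1}\mathbf{1}_{\{H>T_1\}}]$ and iterate on $[T_1,T]$, but on $\{H>T_1\}$ the quantity $Y^\eps_{T_1}$ is the (path-dependent) value of the \emph{same} problem restarted at $T_1$, and you have no lower bound for it other than the one you are trying to prove. Invoking Proposition~\ref{prop:exit le} does not produce a contradiction: that proposition only tells you $\limsup_\eps Y^\eps_0\le Q_0$, which is compatible with $\dbP^{\eps,Z^\eps}[H\le T_1]$ staying bounded away from $1$ provided the cost on $\{H>T_1\}$ is correspondingly small. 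There is no \emph{a priori} reason for the optimizer $Z^\eps$ to force exit before a fixed $T_1<T$; indeed, for the limiting deterministic problem the optimal trajectory may well exit exactly at $T$, in which case one should expect $\dbP^{\eps,Z^\eps}[H\le T_1]$ to be bounded away from $1$ for every fixed $T_1<T$.

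The paper sidesteps this difficulty entirely by a penalization. Rather than tracking the stopping time $H$, it bounds $Y^\eps$ from below by the solution $\underline{Y}^{\eps,m}$ of a BSDE on $[0,T]$ with \emph{bounded} Lipschitz terminal condition $m\,d(X_T,O^c)$; Theorem~\ref{thm:laplace} then applies directly to give $\lim_\eps \underline{Y}^{\eps,m}_0=y^m_0=\inf_\alpha\{\tfrac12\int_0^T|\alpha|^2+m\,d(x^\alpha_T,O^c)\}$. The passage $m\to\infty$ is carried out purely at the level of this deterministic control problem: one shows $(y^m_0)_m$ is bounded and any $\rho$-optimal control $\alpha^\rho$ for $y^m_0$ has $d(x^{\alpha^\rho}_T,O^c)=O(1/m)$, so a small additive perturbation (using invertibility of $\sigma$) makes $x_T$ hit $\partial O$ at negligible extra cost. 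This decouples the vanishing-$\eps$ limit from the singular exit constraint, which is precisely the decoupling your pathwise argument lacks.
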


\begin{proof} 
We organize the proof in three steps.
\\
{\bf 1.} Define another sequence of BSDEs:
 \beaa
 \underline{Y}^{\e,T_1,m}_t
 =  
 md(X_{T_1},O^c)\wedge Y^\e_{T_1}-\frac12\int_t^{T_1}|Z_s^{\e,T_1,m}|^2ds +\int_t^{T_1} Z_t^{\e,T_1,m}\cdot dB_s,\ \dbP^\e\text{-a.s.}
\eeaa
By comparison of BSDEs, we have that $\underline{Y}^{\e,T_1,m}_t\leq Y^\e_t$ for all $t\leq T_1$. Then, by the stability of BSDEs, we know that $Y^{\e,T_1,m}$ converge to the solution of the following BSDE as $T_1\rightarrow T$:
 \beaa
 \underline{Y}^{\e,m}_t 
 &=&  
 md(X_{T},O^c)
 -\frac12\int_t^{T}|Z_s^{\e,m}|^2ds
 +\int_t^{T} Z_t^{\e,m}\cdot dB_s,
 ~~\dbP^\e\text{-a.s.}
 \eeaa
Again, we may apply Theorem \ref{thm:laplace} and get that
 \bea\label{ineq-propexitge}
 \liminf_{\eps\downarrow 0}Y^\eps_0
 \;\ge\;
 \lim_{\e\downarrow 0}\underline{Y}^{\e,m}_0
 &=&
 y^m_0:=\inf_{\a\in\dbL^2}\Big\{\frac12\int_0^T\a_s^2ds+md(x^\a_{T},O^c)\Big\}.
 \eea
{\bf 2.} We now prove that the sequence $\big(y_0^m\big)_m$ is bounded. Take $\a_t\equiv C\cdot 1$. Then
 $$
 x^\a_T
 =
 x_0+\int_0^T(b_t+C\si_t\cdot 1)dt.
 $$
Since $b$ is bounded and $\si$ is positive, when $C=C_0$ is sufficiently large, we will have $x^\a_T\notin O$. Hence, $y_0^m\leq \frac12 C_0^2Td.$
\\
\noindent {\bf 3.} In view of \eqref{ineq-propexitge}, we now conclude the proof of the proposition by verifying that $y_0^m\longrightarrow Q_0$, as $m\to\infty$. Let $\rho>0$. 
By the definition of $y_0^m$, there is a $\rho$-optimal $\a^{\rho}$:
 \beaa
 y_0^m+\rho 
 &>& 
 \frac12\int_0^T |\a_t^{\rho}|^2dt
 +md(x^{\rho}_{T},O^c),
 \eeaa
where we denoted $x^{\rho}:=x^{\a^{\rho}}$. By the boundedness of $(y^m_0)_m$ in Step 2, we have $d(x^{\rho}_T,O^c)\le\frac{C}{m}$. So, there exists a point $x_0\in \pa O$ such that $|x^{\rho}_T-x_0|\leq \frac{C}{m}$. 
Define:
 \beaa
 \tilde{\a}_t
 &:=&
 \a^{\rho}_t+\si^{-1}_t\frac{x_0-x^{\rho}_T}{T}.
 \eeaa
Then, $x^{\tilde{\a}}_T=x_0\notin O$. Also, note that $\si^{-1}_t\frac{x_0-x^{\rho}_T}{T}=o(\frac{1}{m})$ when $m\rightarrow\infty$. Hence,
\beaa
\frac12\int_0^T |\a_t^{\rho}|^2dt = \frac12\int_0^T |\tilde{\a}_t-\si^{-1}_t\frac{x_0-x^{\rho}_T}{T}|^2dt\geq \inf_{\a\in\dbL^2,x^\a_T\notin O}\Big\{\frac12\int_0^T |\a_t|^2dt\Big\}+o(\frac{1}{m}).
\eeaa 
Finally, sending $m\rightarrow \infty$, we see that $\lim_{m\rightarrow \infty}y_0^m +\rho\geq Q_0.$
Since $\rho$ is arbitrary, the proof is complete.
\end{proof}

\section{Viscosity property of the candidate solution}
\label{sect:existence}

This section is devoted to prove Theorem \eqref{thm: viscosity sol}.

\begin{lem}\label{estimat hat omega}
Fix $K\geq 0$. There exists a constant $C$ such that for any $t\in[0,T]$ and $\hat\o^1,\hat\o^2\in\hat\O$,
$$\sup_{\a:\int_t^T|\a|^2_sds\leq K}\|\hat\o^{\a,t,\hat\o^1}-\hat\o^{\a,t,\hat\o^2}\|\leq C\|\hat\o^1-\hat\o^2\|_t$$
\end{lem}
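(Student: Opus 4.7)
The plan is straightforward: observe that the $\omega$-component of $\hat\o^{\a,t,\hat\o}$ is by definition $\o^{\a,t,\hat\o}_s = \int_0^s \a_{t+r}dr$, which does not depend on $\hat\o$ at all. Hence the difference $\hat\o^{\a,t,\hat\o^1} - \hat\o^{\a,t,\hat\o^2}$ has zero $\omega$-component, and the problem reduces to estimating $\psi(s):=\|x^{\a,t,\hat\o^1}-x^{\a,t,\hat\o^2}\|_s$ for $s\in[0,T-t]$, in terms of $\Delta:=\|\hat\o^1-\hat\o^2\|_t$.

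Next I would write out the difference of the defining integral equations for $x^{\a,t,\hat\o^i}$, $i=1,2$. Using the concatenation structure, one has, for any $r\in[0,T-t]$,
\beaa
\big\|(\hat\o^1\otimes_t\hat\o^{\a,t,\hat\o^1}) - (\hat\o^2\otimes_t\hat\o^{\a,t,\hat\o^2})\big\|_{t+r}
\;\le\;
\|\hat\o^1-\hat\o^2\|_t + \psi(r),
\eeaa
since the $\omega$-components of the two trajectories after time $t$ coincide. Combining this with the Lipschitz assumption on $b$ and $\sigma$ (Assumption~\ref{assum:laplace}), and using $|d\o^{\a,t,\hat\o^i}_r|=|\a_{t+r}|dr$, I obtain the integral inequality
\beaa
\psi(s)
\;\le\;
L\int_0^s \big(\Delta + \psi(r)\big)\big(1+|\a_{t+r}|\big)dr,
\q s\in[0,T-t].
\eeaa

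The conclusion then follows from a Gronwall argument: setting $G(s):=L\int_0^s(1+|\a_{t+r}|)dr$, Gronwall's lemma gives $\psi(T-t)\le \Delta\, G(T-t)\, e^{G(T-t)}$. Finally, by Cauchy–Schwarz applied on the bounded interval $[t,T]$,
\beaa
G(T-t)
\;\le\;
LT + L\sqrt{T}\Big(\int_t^T|\a_s|^2 ds\Big)^{1/2}
\;\le\;
LT+L\sqrt{TK},
\eeaa
which is a constant depending only on $L,T,K$ and not on $\a$ or the initial paths $\hat\o^i$. This yields the claimed bound with $C:=(LT+L\sqrt{TK})\exp(LT+L\sqrt{TK})$.

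No step is truly a main obstacle here; the only subtle point is recognizing that the $\o$-component of $\hat\o^{\a,t,\hat\o}$ is independent of $\hat\o$ (so one is essentially dealing with a controlled ODE in $x$ with Lipschitz driver), and that the $L^2$ bound $\int|\a|^2\le K$ gives, via Cauchy–Schwarz over the bounded time interval, an $L^1$ bound on $|\a|$ that is exactly what Gronwall requires to produce a constant uniform in $\a$.
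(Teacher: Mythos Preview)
Your proof is correct and follows essentially the same route as the paper: both observe that the $\omega$-components of $\hat\o^{\a,t,\hat\o^i}$ coincide, reduce to an integral inequality for the $x$-component via the Lipschitz property of $b$ and $\sigma$, and conclude by Gronwall together with the $L^2$ bound on $\alpha$. The only cosmetic difference is that the paper squares the norm before applying Cauchy--Schwarz and Gronwall, whereas you work directly with the norm and invoke Cauchy--Schwarz at the end to bound $\int|\a_{t+r}|\,dr$; both lead to the same conclusion.
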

\begin{proof}
By the definition of $\hat\o^{\a,t,\hat\o^i}$ ($i=1,2$), we know that the components $\o^{\a,t,\hat\o^i}$ are equal. The difference comes from the component $x^{\a,t,\hat\o^i}$. Denote $\d x_t:=\|x^{\a,t,\hat\o^1}-x^{\a,t,\hat\o^2}\|^2_t$. Then, by the definition of $x^{\a,t,\hat\o^i}$ and the Lipschitz continuity of $b$ and $\si$, we obtain that
\beaa
\d x_s &\leq & \int_0^s C(\|\hat\o^1-\hat\o^2\|^2_t+\d x_r)dr+C\big(\int_0^s(\|\hat\o^1-\hat\o^2\|_t+\d x_r)|\a_r| dr\big)^2\\
& \leq & \int_0^s C(\|\hat\o^1-\hat\o^2\|^2_t+\d x_r)dr+2KC(\int_0^s (\|\hat\o^1-\hat\o^2\|^2_t+\d x_r)dr)
\eeaa
Finally, the claim results from the Gronwall's inequality.
\end{proof}

By standard argument, one may easily show the following dynamic programming for the optimal control problem \eqref{potential_sol}.

\begin{lem}[Dynamic programming]\label{lem:u_DPP}
Let $u$ be the value function defined in \eqref{potential_sol}. Then, for all $0\leq t\leq s\leq T$ and $\hat\o\in\hat\O$, we have
\beaa
u(t,\hat\o) 
 =
 \inf_{\a\in\dbL^2_d}
 \Big\{\frac12\int_t^s |\a_s|^2ds+u^{t,\hat\o}(s-t,\hat\o^{\a,t,\hat\o})\Big\},
 \eeaa
where $u^{t,\hat\o}(t',\hat\o'):=u(t+t',\hat\o\otimes_t \hat\o')$.
\end{lem}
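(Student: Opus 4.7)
The plan is to establish Lemma \ref{lem:u_DPP} by the classical two-inequality scheme for the dynamic programming principle of a deterministic optimal control problem. Since the controlled system $\hat\o^{\a,t,\hat\o}$ is pathwise well defined for each deterministic $\a\in\dbL^2_d$, the only subtlety is the bookkeeping of the two concatenation operations $\otimes_t$ and $\otimes_s$, together with $\e$-optimal selection of controls; no measurable selection argument is needed.

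First I would establish a flow identity for the controlled paths. Fix $0\le t\le s\le T$, $\hat\o\in\hat\O$, and $\gamma\in\dbL^2_d([t,T])$. Writing $\a:=\gamma|_{[t,s]}$, $\b:=\gamma|_{[s,T]}$, and $\hat\eta:=\hat\o\otimes_t\hat\o^{\a,t,\hat\o}\in\hat\O$, the claim is
$$\hat\o\otimes_t\hat\o^{\gamma,t,\hat\o}\;=\;\hat\eta\otimes_s\hat\o^{\b,s,\hat\eta}\q\text{on }[0,T].$$
The identity is immediate on $[0,s]$, and on $[s,T]$ it reduces, for the $\o$-component, to the elementary splitting $\int_0^{r}\gamma_{t+v}dv=\int_0^{s-t}\a_{t+v}dv+\int_0^{r-(s-t)}\b_{s+v}dv$, while for the $x$-component both sides satisfy the same controlled integral equation driven by $\b$ with the same initial value $\hat\eta_s$ at time $s$, so the Lipschitz hypothesis of Assumption \ref{assum:laplace} yields uniqueness. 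Combined with the trivial cost decomposition, this gives the key identity
$$\xi^{t,\hat\o}\big(\hat\o^{\gamma,t,\hat\o}\big)+\frac12\int_t^T|\gamma_r|^2dr\;=\;\frac12\int_t^s|\a_r|^2dr+\Big\{\xi^{s,\hat\eta}\big(\hat\o^{\b,s,\hat\eta}\big)+\frac12\int_s^T|\b_r|^2dr\Big\},$$
whose bracket is exactly the cost of the control $\b$ in the problem defining $u(s,\hat\eta)=u^{t,\hat\o}(s-t,\hat\o^{\a,t,\hat\o})$.

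For the inequality ``$\le$'' in the lemma, fix any $\a\in\dbL^2_d([t,s])$ and $\e>0$, and let $\b^\e\in\dbL^2_d([s,T])$ be $\e$-optimal for $u(s,\hat\eta)$. Plugging $\gamma:=\a\otimes\b^\e$ into the definition of $u(t,\hat\o)$ and applying the cost identity above gives $u(t,\hat\o)\le\tfrac12\int_t^s|\a_r|^2dr+u^{t,\hat\o}(s-t,\hat\o^{\a,t,\hat\o})+\e$; sending $\e\to 0$ and taking the infimum over $\a$ concludes. For ``$\ge$'', pick $\gamma^\e\in\dbL^2_d([t,T])$ that is $\e$-optimal for $u(t,\hat\o)$ and set $\a^\e:=\gamma^\e|_{[t,s]}$; using the cost identity and the very definition of $u(s,\hat\eta)$ as an infimum, we obtain $u(t,\hat\o)+\e\ge\tfrac12\int_t^s|\a^\e_r|^2dr+u^{t,\hat\o}(s-t,\hat\o^{\a^\e,t,\hat\o})$, which is at least the right-hand side of the DPP; letting $\e\to 0$ closes the argument.

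The only real work lies in verifying the flow identity; everything else is boilerplate $\e$-optimal concatenation. Because the control problem is deterministic and the infimum is taken over a pointwise-defined class of paths, a single $\e$-optimal control per initial datum suffices, so the measurable-selection difficulties that complicate the stochastic version of the DPP are absent here.
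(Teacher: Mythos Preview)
Your proof is correct and is precisely the ``standard argument'' the paper invokes. In fact the paper does not give a proof of Lemma~\ref{lem:u_DPP} at all: it merely writes ``By standard argument, one may easily show the following dynamic programming for the optimal control problem~\eqref{potential_sol}'' and states the lemma without further justification. Your concatenation/flow identity together with the two $\e$-optimal inequalities is exactly the expected way to fill in this gap, and your observation that measurable selection is unnecessary in the present deterministic setting is the reason the paper can afford to omit the details.
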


\begin{lem}\label{lem: lipschitz}
The function $u$ defined in (\ref{potential_sol}) is bounded and Lipschitz-continuous.
\end{lem}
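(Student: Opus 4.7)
The plan is to establish (i) boundedness, (ii) spatial Lipschitz continuity at fixed $t$, and (iii) a time Lipschitz estimate, and to combine them into Lipschitz continuity in the pseudo-distance $d$. For boundedness, the control $\a \equiv 0$ in \eqref{potential_sol} gives $u(t,\hat\o) \leq \xi^{t,\hat\o}(\hat\o^{0,t,\hat\o}) \leq \|\xi\|_\infty$, while $\xi \geq -\|\xi\|_\infty$ together with nonnegativity of the running cost gives $u(t,\hat\o) \geq -\|\xi\|_\infty$. As a consequence, the infimum in \eqref{potential_sol} may be restricted to controls satisfying $\frac12 \int_t^T |\a_s|^2 ds \leq 3\|\xi\|_\infty$, since any larger control yields an objective larger than $\|\xi\|_\infty \geq u$. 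This uniform $L^2$ bound is exactly what is needed to apply Lemma \ref{estimat hat omega}.

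For the spatial estimate at fixed $t$, I use the same control $\a$ (in the restricted class) for the problems at $\hat\o^1$ and $\hat\o^2$. Lemma \ref{estimat hat omega} controls $\|\hat\o^{\a,t,\hat\o^1} - \hat\o^{\a,t,\hat\o^2}\|$ by $C\|\hat\o^1 - \hat\o^2\|_t$, and together with the Lipschitz assumption on $\xi$ (applied to the concatenations $(\hat\o^i \otimes_t \hat\o^{\a,t,\hat\o^i})_{T\wedge\cdot}$) this transfers to a bound on the cost functionals. Taking the infimum yields $|u(t,\hat\o^1) - u(t,\hat\o^2)| \leq C\|\hat\o^1 - \hat\o^2\|_t$.

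The main technical step is the time estimate. Using non-anticipativity of $u$, I replace $\hat\o$ by $\hat\o^* := \hat\o_{t\wedge\cdot}$ and apply the triangle inequality through $(t',\hat\o^*)$; combined with the spatial bound this reduces Lipschitz continuity in $d$ to showing $|u(t,\hat\o^*) - u(t',\hat\o^*)| \leq C|t-t'|$ for $t\leq t'$. For the easy direction, the dynamic programming Lemma \ref{lem:u_DPP} with $\a \equiv 0$ on $[t,t']$ gives $u(t,\hat\o^*) \leq u(t', \hat\o^* \otimes_t \hat\o^{0,t,\hat\o^*})$; since $\hat\o^*$ is constant on $[t,t']$ and $b$ is bounded, the evolved path differs from $\hat\o^*_{t'\wedge\cdot}$ by at most $C|t'-t|$, and the spatial Lipschitz estimate at $t'$ closes this direction. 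For the reverse direction, take an $\varepsilon$-optimal restricted control $\a^*$ for $u(t,\hat\o^*)$ and apply DPP to get
\[
u(t,\hat\o^*) + \varepsilon \;\geq\; \tfrac12 \int_t^{t'} |\a^*_s|^2\,ds + u(t', \hat\o^* \otimes_t \hat\o^{\a^*,t,\hat\o^*}).
\]
The evolved path differs from $\hat\o^*_{t'\wedge\cdot}$ by at most $C|t'-t| + C\int_t^{t'}|\a^*_s|\,ds$, so the spatial estimate at $t'$ incurs a penalty of this order; the AM--GM inequality $C\int_t^{t'}|\a^*_s|\,ds \leq \tfrac12\int_t^{t'}|\a^*_s|^2\,ds + \tfrac{C^2|t'-t|}{2}$ then absorbs the $\int|\a^*|$ term into the running cost that already appears with the correct sign in the DPP inequality, leaving only a residual linear in $|t'-t|$; sending $\varepsilon \to 0$ closes the estimate.

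The main obstacle is precisely this absorption step: a naive Cauchy--Schwarz bound on $\int_t^{t'}|\a^*_s|\,ds$ yields only $\sqrt{|t'-t|}$ Hölder regularity in time, and one must exploit the interplay between the spatial Lipschitz constant, the running quadratic cost, and AM--GM in order to upgrade this to a genuine Lipschitz estimate.
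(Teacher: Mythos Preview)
Your proposal is correct and follows essentially the same route as the paper: boundedness via the bound on $\xi$, restriction to a uniformly $\dbL^2$-bounded class of controls, spatial Lipschitz via Lemma \ref{estimat hat omega}, and the time estimate via the dynamic programming of Lemma \ref{lem:u_DPP} combined with the spatial bound and the quadratic absorption $-\frac12 a^2 + C|a| \le C^2/2$ (your AM--GM step). The only cosmetic difference is that the paper packages the time estimate as two-sided bounds on the single quantity $u(t+h,\hat\o_{t\wedge\cdot})-u(t,\hat\o)$ using the sup-form of DPP, whereas you argue the two inequalities separately with an $\varepsilon$-optimizer; your ``easy direction'' is in fact slightly more careful, since with $\a\equiv 0$ the $x$-component still drifts by $b$ and one really gets a $C h$ error there rather than the paper's claimed $\ge 0$.
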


\begin{proof}
Clearly, $u$ inherits the bound of $\xi$. For $t\in [0,T]$, $\hat\o^1,\hat\o^2\in\hat\O$, since $\xi$ is bounded, there exists constant $K$ such that
\beaa
u_t(\hat\o^i)
& = & \inf_{\a\in\dbL^2_d}\Big\{\frac12\int_t^T|\a_s|^2ds+\xi^{t,\hat\o^i}(\hat\o^{\a,t,\hat\o^i})\Big\}\\
& = & \inf_{\a:\int_t^T|\a|^2_sds\leq K}\Big\{\frac12\int_t^T|\a_s|^2ds+\xi^{t,\hat\o^i}(\hat\o^{\a,t,\hat\o^i})\Big\}.
\eeaa
It follows from Lemma \ref{estimat hat omega} that:
 \be\label{estimate eta}
 \big|u(t,\hat\o^1)-u(t,\hat\o^2)\big|
 \le 
 \sup_{\a:\int_t^T|\a|^2_sds\leq K}
 \big\{\big|\xi^{t,\hat\o^1}(\hat\o^\a)-\xi^{t,\hat\o^2}(\hat\o^\a)
         \big|
 \big\}
 \le
 C\big\|\hat\o^1_{t\wedge\cdot}-\hat\o^2_{t\wedge\cdot}\big\|.
\ee
On the other hand, fixing $\hat\o$, it follows from the dynamic programming principle that
 \be\label{estimate time 1}
 u(t+h,\hat\o_{t\wedge\cdot})-u(t,\hat\o) 
 =
 \sup_{\a\in\dbL^2}
 \Big\{-\frac12\int_t^{t+h}\a^2_sds-u^{t,\hat\o}(h,\hat\o^{\a,t,\hat\o})+u(t+h,\hat\o_{t\wedge\cdot})\Big\}
 \ge
 0,
 \ee
where the last inequality is induced by the constant control $\alpha=0$.
Moreover, since $b$ and $\si$ are bounded, note that $\|(\hat\o\otimes_t\hat\o^{\a,t,\hat\o})_{(t+h)\wedge\cdot}-\hat\o_{t\wedge\cdot}\|\leq C\int_t^{t+h}(1+|\a_s|)ds$. Then, using again the dynamic programming principle together with (\ref{estimate eta}), we obtain
 \be\label{estimate time 2}
 u(t+h,\hat\o_{t\wedge\cdot})-u(t,\hat\o)
 \le 
 \sup_{\a\in\dbL^2}
 \Big\{\int_t^{t+h}\Big(-\frac12\a^2_s+C|\a_s|+C\Big)ds\Big\}
 \le
 \Big(\frac{C^2}{2}+C\Big)h.
 \ee
Combining this with (\ref{estimate eta}), we see that
 \beaa
 \big|u(t+h,\hat\o^1)-u(t,\hat\o^2)\big| 
 &\le& 
 \big|u(t+h,\hat\o^1)-u(t+h,\hat\o^1_{t\wedge\cdot})\big|
 \\
 &&
 +\big| u(t+h,\hat\o^1_{t\wedge\cdot})-u(t,\hat\o^1)\big|
 +\big|u(t,\hat\o^1)-u(t,\hat\o^2)\big|
 \\
 &\le& 
 C'(\|\hat\o^1\|_t^{t+h}+h+
     \|\hat\o^1_{t\wedge\cdot}-\hat\o^2_{t\wedge\cdot}\|)
 \\
 &\le& 
 3C'(h+\|\hat\o^1_{(t+h)\wedge
\cdot}-\hat\o^2_{t\wedge\cdot}\|).
\eeaa
\end{proof}

Now, consider a functional $u_K$:
\beaa
 u_K(t,\hat\o) := \inf_{\|\a\|_\infty\leq K} \Big[\xi(\hat\o \otimes_t \hat\o^{\a,t,\hat\o}) + {1\over 2} \int_t^T |\a_s|^2ds\Big];
\eeaa
Notice that $u_K\ge u_{K-1}\ge u$.

\begin{prop}
For $K$ sufficiently large, we have $u=u_K$. 
\end{prop}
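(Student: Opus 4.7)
The inequality $u \leq u_K$ is immediate since $u_K$ is defined by restricting the infimum to the smaller class $\{\alpha : \|\alpha\|_\infty \leq K\}$. The entire content is to establish the reverse inequality $u_K \leq u$ for $K$ large enough. The plan is to show that, given any near-optimizer $\alpha$ of $u(t,\hat\omega)$, its pointwise truncation $\alpha_K := \alpha\,\mathbf 1_{\{|\alpha|\le K\}}$ does not increase the cost functional, so that the infimum in $u_K$ is attained in the limit by bounded controls.

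First I would restrict to $L^2$-bounded controls: since $\xi$ is bounded and the choice $\alpha\equiv 0$ gives cost $\le \|\xi\|_\infty$, any $\varepsilon$-optimizer $\alpha$ of $u(t,\hat\omega)$ must satisfy $\int_t^T|\alpha_s|^2 ds \le M$, where $M = M(\|\xi\|_\infty)$ is a universal constant. Then, given such an $\alpha$ and its truncation $\alpha_K$, I would control the path difference $\hat\omega^{\alpha_K,t,\hat\omega} - \hat\omega^{\alpha,t,\hat\omega}$. On the $\omega$-component this is elementary:
\[
 \|\omega^{\alpha_K,t,\hat\omega} - \omega^{\alpha,t,\hat\omega}\|_T
 \;\le\; \int_{\{|\alpha|>K\}}|\alpha_r|\,dr
 \;\le\; \frac1K\int_{\{|\alpha|>K\}}|\alpha_r|^2\,dr.
\]
For the $x$-component, I would plug $\alpha_K - \alpha$ into the definition of $x^{\alpha,t,\hat\omega}$, split the $\sigma\,d\omega$ term into a forcing piece (controlled by $\|\sigma\|_\infty\int_{\{|\alpha|>K\}}|\alpha|\,dr$) plus a Lipschitz piece in $(\omega,x)$, and apply Gronwall's inequality. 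The Gronwall factor is $\exp\{L\int_0^T(1+|\alpha_r|)\,dr\}$, which by Cauchy–Schwarz is uniformly bounded on the $L^2$-ball of radius $\sqrt{M}$. This yields $\|\hat\omega^{\alpha_K,t,\hat\omega} - \hat\omega^{\alpha,t,\hat\omega}\| \le \frac{C}{K}\int_{\{|\alpha|>K\}}|\alpha_r|^2\,dr$ for a constant $C$ depending only on the data.

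Combining this with Lipschitz continuity of $\xi$ and the exact identity
\[
 \tfrac12\!\int_t^T\!|\alpha_{K,s}|^2 ds-\tfrac12\!\int_t^T\!|\alpha_s|^2 ds
 \;=\; -\tfrac12\!\int_{\{|\alpha|>K\}}\!|\alpha_r|^2\,dr,
\]
I would obtain
\[
 J(\alpha_K) - J(\alpha)
 \;\le\; \Big(\tfrac{L_\xi C}{K}-\tfrac12\Big)\!\int_{\{|\alpha|>K\}}\!|\alpha_r|^2\,dr,
\]
where $J(\alpha) := \xi^{t,\hat\omega}(\hat\omega^{\alpha,t,\hat\omega}) + \frac12\int_t^T |\alpha_s|^2 ds$. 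Choosing $K > 2 L_\xi C$ makes the bracket non-positive, so $J(\alpha_K)\le J(\alpha) \le u(t,\hat\omega)+\varepsilon$. Since $\|\alpha_K\|_\infty\le K$, this yields $u_K(t,\hat\omega)\le u(t,\hat\omega)+\varepsilon$, and letting $\varepsilon\downarrow 0$ gives the desired equality.

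The main obstacle is in Step~3: the Gronwall estimate must produce a constant independent of $\alpha$, which requires first restricting to the $L^2$-ball and then noticing that the exponential factor from the $\alpha$-dependent drift contribution stays bounded on that ball. Everything else reduces to bookkeeping with the Lipschitz hypotheses on $b$, $\sigma$, and $\xi$ under Assumption \ref{assum:ppde}.
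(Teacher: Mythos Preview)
Your truncation argument is correct and gives a genuinely different, more elementary proof than the paper's. The paper proceeds indirectly: it first establishes that $u_K(0,0)$ admits an \emph{exact} optimizer $\alpha^K$ (via weak $\dbL^2$-compactness and Mazur's lemma applied to a minimizing sequence), then combines the dynamic programming principle for $u_K$ with the uniform Lipschitz continuity of $u_K(t,\cdot)$ to derive the pointwise inequality $\tfrac12\int_t^{t+h}|\alpha^K_s|^2 ds \le CL\int_t^{t+h}(1+|\alpha^K_s|)\,ds$, forcing $\|\alpha^K\|_\infty\le C'$ with $C'$ independent of $K$; hence $u_K=u_{C'}$ for all $K\ge C'$ and the limit $u$ equals $u_{C'}$.

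By contrast, you work directly with $\varepsilon$-optimizers of $u$ and show that pointwise truncation never increases the cost once $K>2L_\xi C$, using only the Lipschitz hypothesis on $\xi$ and a Gronwall estimate (with the exponential factor controlled on the $\dbL^2$-ball via Cauchy--Schwarz). Your route avoids compactness, Mazur's lemma, and the dynamic programming principle entirely. What the paper's argument buys in exchange is the existence of optimal controls and an a~priori $L^\infty$ bound on \emph{any} optimizer of $u_K$, which is of independent interest for the subsequent viscosity-solution analysis; your argument yields only that bounded controls suffice in the infimum, which is all the proposition actually asserts.
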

\begin{proof}
Similar to Lemma \ref{lem: lipschitz}, for each $K$, one may easily see that $u_K(t,\cd)$ is uniformly Lipschitz in $\o$ with the same Lipschitz constant denoted as $L$. We first claim that there exists $\a^K$ such that
 \bea
 \label{oK}
 u_K(0,0) = \xi(\hat\o^{\a^K}) + {1\over 2} \int_0^T |\a^K_t|^2dt.
 \eea
 Then for any $t$ and $h$, one can easily show that 
 \beaa
  u_K(t,\hat\o^{\a^K}) = u_K(t+h, \hat\o^{\a^K}) + {1\over 2} \int_t^{t+h} |\a^K_s|^2ds.
 \eeaa
 On the other hand, by the dynamic programming,
 \beaa
  u_K(t,\hat\o^{\a^K}) \le u_K(t+h, \hat\o^{\a^K}_{t\wedge \cd}).
 \eeaa
 Then
 \begin{multline*}
 {1\over 2} \int_t^{t+h} |\a^K_s|^2ds \le u_K(t+h, \hat\o^{\a^K}_{t\wedge \cd}) - u_K(t+h, \hat\o^{\a^K})\\ 
 \le L \|\hat\o^{\a^K} - \hat\o^{\a^K}_{t\wedge \cd}\|_{t+h}\le CL\int_t^{t+h} (1+|\a^K_s|)ds,
 \end{multline*}
 where $C$ is a common bound for the coefficients $b$ and $\si$. Since $t$ and $h$ are arbitrary, we get $ \|\a^K\|_\infty\le C^{'}$ for some constant $C^{'}$ independent of $K$. Then $u_K = u_{C^{'}}$ for any $K\ge C^{'}$, and thus $u = u_{C^{'}}$.
 
 We now prove the existence claim \eqref{oK}. Let $\a^{K,n}$ be a minimum sequence of controls for $u_K(0,0)$, namely 
 \bea
\label{oKn}
 u_K(0,0) =\lim_{n\to\infty}\Big[ \xi(\hat\o^{\a^{K,n}}) + {1\over 2} \int_0^T |\a^{K,n}_t|^2dt\Big].
 \eea
By compactness of $\O_K$, the sequence $\{\o^{\a^{K,n}}, n\ge 1\}$ has a limit $\o^K\in \O_K$, after possibly passing to a subsequence:
 \bea
 \label{oKnconv}
 \lim_{n\to \infty}\|\o^{\a^{K,n}}-\o^K\|_T =0.
 \eea
  By \reff{oKn} and since $\xi$ is bounded,  it is clear that $\sup_n\int_0^T |\a^{K,n}_t|^2 dt<\infty$. Then without loss of generality we may assume $\{\a^{K,n}, n\ge 1\}$ converges to certain $\a^K$ weakly in $\dbL^2([0, T])$. Then for any $t$ and $h$,
  \beaa
  \o^K_{t+h} - \o^K_t = \lim_{n\to\infty} [  \o^{\a^{K,n}}_{t+h} - \o^{\a^{K,n}}_t] =  \lim_{n\to\infty} \int_t^{t+h}  \a^{K,n}_s ds =  \int_t^{t+h}  \a^K_s ds.
  \eeaa
  This implies that $\o^K = \o^{\a^K}$. Further, by Gronwall's inequality, we obtain that
\be\label{x conv} 
 \lim_{n\rightarrow\infty}\|x^{\a^{K,n}}-x^{\a^K}\|_T=0.
\ee
  
  Now by Mazur's lemma, there exist convex combinations  $\tilde \a^{K, n} = \sum_i c^n_i \a^{K, m^n_i}$, where $m^n_i \ge n$, such that $\{\tilde\a^{K,n}, n\ge 1\}$ converges to  $\a^K$ strongly in $\dbL^2([0, T])$. Then by Jensen's inequality we see that 
  \beaa
  \int_0^T |\a^K_t|^2 dt=  \lim_{n\to\infty} \int_0^T |\tilde\a^{K,n}_t|^2 dt \le \lim_{n\to\infty} \sum_i c^n_i \int_0^T | \a^{K,m^n_i}_t|^2 dt 
  \eeaa
  On the other hand, by \reff{oKnconv}, \reff{x conv}  and since $\xi$ is continuous, we have
  \beaa
  \xi(\hat\o^{\a^K}) =  \lim_{n\to\infty} \sum_i c^n_i \xi(\hat\o^{\a^{K, m^n_i}}).
  \eeaa
  Then
  \beaa
  \xi(\hat\o^{\a^{K}}) + {1\over 2} \int_0^T |\a^{K}_t|^2dt \le \lim_{n\to\infty} \sum_i c^n_i \Big[\xi(\hat\o^{\a^{K, m^n_i}}) + {1\over 2}\int_0^T |\a^{K,m^n_i}_t|^2 dt \Big] = u_K(0,0),
  \eeaa
  where the last equality follows from \reff{oKn}. This proves the claim.
\end{proof}

Clearly our equation \eqref{PPDE} satisfies the conditions of Lukoyanov \cite{Lukoyanov}, so that a comparison result for bounded viscosity super and subsolutions holds true. Conseuently, uniqueness holds for \eqref{PPDE} within the class of bounded functions and, in order to prove Theorem \ref{thm: viscosity sol} it remains to verify that $u$ satisfies the viscosity properties.

\vspace{5mm}

\no {\bf Proof of Theorem \ref{thm: viscosity sol}}\quad Fix $K_0$ such that $u=u_{K_0}$. Recall that $b$ and $\si$ are bounded by $C$. Then, define $K:=C(1+K_0)$, so that for all $\|\a\|_\infty \leq K_0$ and $\hat\o\in\hat\O_K$, we have $\hat\o^{\a,t,\hat\o}\in\hat\O_K$. 

We first prove the viscosity subsolution property. Let $(t,\hat\o)\in \Theta_{K}$, and $\f\in \underline{\cA}^K u(t,\hat\o)$. By the dynamic programming principle, we have:
 \bea\label{dpp}
 u(t,\hat\o)
 =
 \inf_{\a\in\dbL^2}\Big\{\frac12\int_t^{t+h}\a^2_rdr+u^{t,\hat\o}(h,\hat\o^{\a,t,\hat\o})\Big\}
 &\mbox{for}&
 h\geq 0.
 \eea
Since $\f\in\underline{\cA}^K u(t,\hat\o)$, we have for all $\|\a\|_\infty\leq K_0$:
 $$
 0
 \le
 \frac12 \int_t^{t+h} |\a|^2_rdr+u^{t,\hat\o}(h,\hat\o^{\a,t,\hat\o})-u(t,\hat\o)
 \le 
 \frac12 \int_t^{t+h} |\a|^2_rdr+\f^{t,\hat\o}(h,\hat\o^{\a,t,\hat\o})-\f(t,\hat\o).
 $$
By the smoothness of $\f$, this provides:
 \be\label{for a}
 0
 \le
 \frac{1}{h}\int_0^{h} \big(\pa_t\f+b\pa_x\f+\frac12|\a|^2
                                          +\a\cdot(\pa_\o\f+\si^{\rm T}\pa_x\f)
                                   \big)^{t,\hat\o}(r,\hat\o^{\a,t,\hat\o})dr.
 \ee
By sending $h\rightarrow 0$, we obtain
 \beaa
 -\left(\pa_t\f+b_{\cdot}\pa_x\f+\inf_{|\a|\leq K_0}\Big(\frac12|\a|^2
                                          +\a\cdot(\pa_\o\f+\si^{\rm T}\pa_x\f)\Big)\right)(t,\hat\o)
 \le
 0.
 \eeaa

We next prove the viscosity supersubsolution property. Assume not, then there exists  $\f \in \overline{\cA}^{K} u(t,\hat\o)$ such that
\beaa
c :=  -\left(\pa_t\f+b_{\cdot}\pa_x\f+\inf_{|\a|\leq K_0}\Big(\frac12|\a|^2
                                          +\a\cdot(\pa_\o\f+\si^{\rm T}\pa_x\f)\Big)\right)(t,\hat\o) >0.
\eeaa
Without loss of generality, we may assume that $\f(t,\hat\o)=u(t,\hat\o)$. Recall that $u=u_{K_0}$. Now for any $h>0$, by the dynamic programming,
\beaa
\f(t,\hat\o) = u(t,\hat\o) 
& = &
 \inf_{\|\a\|_\infty\leq K_0} \Big[u^{t,\hat\o}_h(\hat\o^{\a,t\hat\o}) + {1\over 2}\int_t^{t+h} |\a_s|^2 ds\Big]\\
 & \ge &
  \inf_{\|\a\|_\infty\leq K_0} \Big[\f^{t,\hat\o}_h(\hat\o^{\a,t,\hat\o}) + {1\over 2}\int_t^{t+h} |\a_s|^2 ds\Big].
\eeaa
Then, 
\beaa
0 &\ge& \inf_{\|\a\|_\infty\leq K_0} \Big[\f^{t,\hat\o}_h(\hat\o^{\a,t,\hat\o})-\f_t(\hat\o) + {1\over 2}\int_t^{t+h} |\a_s|^2 ds\Big]\\
&=&  \inf_{\|\a\|_\infty\leq K_0}  \int_0^h \Big[\pa_t\f+b_{\cdot}\pa_x\f+ \frac12|\a|^2 +\a\cdot(\pa_\o\f+\si^{\rm T}\pa_x\f)\Big]^{t,\hat\o}(s,\hat\o^{\a,t,\hat\o}) ds\\
&\ge&  \inf_{\|\a\|_\infty\leq K_0}  \int_0^h \Big[c-C\Big(|\pa_t \f^{t,\hat\o}(s,\hat\o^{\a,t,\hat\o})- \pa_t \f(t,\hat\o)|+|\pa_{\hat\o}\f^{t,\hat\o}(s,\hat\o^{\a,t,\hat\o})-\pa_{\hat\o}\f(t,\hat\o)|\Big)\Big]ds \\
&\ge&  \Big[c-\rho\big(d_\infty((1+K)h\big) \Big] h,
\eeaa
which leads to a contradiction by choosing $h$ sufficiently small.
\qed

\end{document}